\numberwithin{equation}{section}
\definecolor{dblue}{rgb}{0,0,0.45}
\definecolor{red}{rgb}{0.7,0,0}
\newtheorem{theorem}{Theorem}[section]
\newtheorem{lemma}[theorem]{Lemma}
\newtheorem{corollary}[theorem]{Corollary}
\theoremstyle{definition}
\newtheorem{remark}[theorem]{Remark}
\theoremstyle{remark}
\newcommand{\N}{{\mathbb N}}
\newcommand{\R}{{\mathbb R}}
\newcommand{\f}{\frac}
\newcommand{\p}{\partial}
\newcommand{\al}{\alpha}
\newcommand{\be}{\beta}
\newcommand{\si}{\sigma}
\newcommand{\floor}[1]{\big\lfloor #1 \big\rfloor}
\begin{document}

\title{Multiplier  conditions for   Boundedness    into Hardy spaces}

\author[Grafakos]{Loukas Grafakos}
\address{Department of Mathematics, University of Missouri, Columbia, MO 65211}
\email{grafakosl@missouri.edu}

\author[Nakamura]{Shohei Nakamura}
\address{Department of Mathematical Science and Information Science}
\email{pokopoko9131@icloud.com}

\author[Nguyen]{Hanh Van Nguyen}
\address{Department of Mathematics, University of Alabama, Tuscaloosa, AL 35487}
\email{hvnguyen@ua.edu}

\author[Sawano]{Yoshihiro Sawano}
\address{Department of Mathematical Science and Information Science}
\email{yoshihiro-sawano@celery.ocn.ne.jp}

\thanks{The first author would like to thank the Simons Foundation.
The fourth author is
supported by Grant-in-Aid for Scientific Research (C), No.~16K05209, Japan Society for the Promotion of Science.}
\thanks{MSC 42B15, 42B30}

\maketitle

\begin{abstract}
In the present work, we find useful and explicit    necessary and sufficient conditions for linear and multilinear
multiplier operators of Coifman-Meyer type, finite sum of products of
Calder\'on-Zygmund operators, and also of  intermediate types to be bounded 
from a product of Lebesgue or Hardy spaces into a Hardy space. 
These conditions    state that  the symbols of the multipliers
$\sigma(\xi_1,\dots , \xi_m)$ and their derivatives vanish on  the
hyperplane $\xi_1+\cdots+\xi_m=0$.
\end{abstract}

\section{Introduction}

Hardy spaces are spaces of
distributions on $\mathbb R^n$ whose smooth maximal functions lie in $L^p(\R^n)$,
for $0<p<\infty$. These spaces coincide with $L^p({\mathbb R}^n)$ if $1<p<\infty$.
Let $0<p\le 1$ and $N$ is a prescribed integer satisfying
$N\ge \floor{n(\f 1p-1)}+1$,
where $\floor{s}$ denotes the largest integer less than or equal to $s$.
An $L^\infty$ function
$a$ is said to be $(p,\infty)$-atom, if
$a$ is supported
on some cube $Q$ and
satisfies
\begin{equation*}
\|a\|_{L^{\infty}}
\leq
1,
\quad
\int_{\R^n}
x^\alpha a(x)dx=0
\end{equation*}
for all $\alpha\in\N_0^n$ such that
$|\alpha| \leq N$, see \cite{garciaRDF}, \cite{SteinHA}. 
The space $H^p({\mathbb R}^n)$ can be characterized
as the set of all tempered distributions  
  which can be expressed as a sum of the form 
$\sum_{j=1}^\infty \lambda_j a_j$,
where  
$a_j$ are $(p,\infty)$-atoms and $(\lambda_j)_{j=1}^{\infty}$ is a sequence of non-negative numbers such that
\[
\Big\| \sum_{j=1}^{\infty}\lambda_j\chi_{Q_j} \Big\|_{L^p}<\infty.
\]

In this note we study linear or multilinear multiplier operators that map products of Hardy spaces into other Hardy spaces. These operators have the form
\begin{equation}\label{eq-TSigma}
T_{\sigma}(f_1,\ldots,f_m)(x) = \int_{\mathbb R^{mn}}e^{2\pi ix\cdot (\xi_1+\cdots+\xi_m)}\sigma(\xi_1,\ldots,\xi_m)\widehat{f_1}(\xi_1)\cdots \widehat{f_m}(\xi_m)\, d\xi_1\cdots d\xi_m,
\end{equation}
where   $\sigma$ is a bounded function on $\R^{mn}$.
Here $\widehat{f} (\xi)$ denotes the Fourier transform of a Schwartz function $f$ defined by $ \int_{\mathbb R^n} f(x) e^{-2\pi i x\cdot \xi} dx$.
We are interested in explicit   conditions on the symbol $\si$ that characterize boundedness into a Hardy space. 
These conditions reflect the amount of cancellation the symbols contain. For instance,  boundedness into $H^1({\mathbb R}^n)$  
for $m$-linear operators is characterized by the cancellation condition $\si(\xi_1,\dots, \xi_m)=0$
 on the   hyperplane
 $\Delta_n$,
 where $\Delta_n$ is given by
$$
\Delta_n = \{(\xi_1,\ldots,\xi_m)\in\mathbb{R}^{mn}\ :\ \xi_1+\cdots+\xi_m=0\}.
$$

For a multiindex $\alpha= (i_1, \dots , i_n)$ we set 
 $\partial^\al_{k} = \partial^{i_1}_{\xi_{k1}} \cdots \partial^{i_n}_{\xi_{kn}} $, where 
 $\xi_k=(\xi_{k1}, \dots , \xi_{kn}) \in \mathbb R^n$.
A symbol $\sigma(\xi_1,\dots , \xi_m)$  on $\mathbb R^{mn}$ is called of Coifman-Meyer type if
\begin{equation}\label{Coifman-Meyer}
\big|\partial^{\alpha_1}_{1} \cdots \partial^{\alpha_m}_{m} \sigma(\xi_1,\dots, \xi_m) \big| \le C_{\alpha_1,\dots, \alpha_m} (|\xi_1|+\cdots+|\xi_m|)^{-(|\alpha_1|+\cdots+|\alpha_m|)}
\end{equation}
for sufficiently large   $n$-tuples of nonnegative integers $\alpha_j$, henceforth called multiindices.   
Here $|\al|= i_1+\dots +i_n$ is the size of a multiindex
$\al=(i_1,\dots , i_n)\in \N_0^n$. 
The associated operators $T_\sigma$ are called multilinear Calder\'on-Zygmund operators; these 
were
initially introduced in \cite{CM2} and were  extensively studied in \cite{GrafakosTorresAdvances}.
These operators map products
$L^{p_1}({\mathbb R}^n)  \times \cdots \times L^{p_m}({\mathbb R}^n)$
of Lebesgue spaces
into another Lebesgue space $L^p({\mathbb R}^n)$,
where $1<p_j<\infty$, $j=1,2,\ldots,m$, 
and $0<p<\infty$ satisfy
\begin{equation}\label{indicesHolder}
\frac{1}{p}=\frac{1}{p_1}+\cdots+\frac{1}{p_m}.
\end{equation}
Boundedness into a Lebesgue space also holds if the initial spaces are Hardy spaces, as
shown in \cite{GLKT01};
the range $0<p_i<\infty$ is included in \cite{GLKT01}.
Additionally, it was shown by
the authors \cite{GNNS17}  that $T_\sigma$ maps a product of Hardy spaces 
into another Hardy space  if the action of $T_\sigma$ on atoms has vanishing moments, i.e.
\begin{equation}\label{HuMeng}
\int_{\R^{n} } x^{\al}T_\si(a_1,\dots , a_m)(x) \, dx = 0
\end{equation}
for all $(p_j,\infty)$-atom $a_j$
and
for all $|\al | \le \floor{n(\frac{1}{p}-1)}$. Remarkably, the cancellation condition 
\eqref{HuMeng} is only required to hold for all smooth functions with 
compact support $a_j\in \mathcal{O}_N(\mathbb{R}^n) $, where
\[
\mathcal{O}_N(\mathbb{R}^n)=
\bigcap_{\beta \in {\mathbb N}_0^n, |\beta| \le N}
\left\{f \in {\mathscr C}^\infty_{\rm c}({\mathbb R}^n)\,:\,
\int_{{\mathbb R}^n}x^\beta f(x)\,dx=0\right\}.
\]

We have the following theorem concerning operators associated with Coifman-Meyer symbols.

\begin{theorem}\label{Main-Thm}
 Let $\sigma$ be a bounded function on $\mathbb{R}^{mn}$ and
 $\sigma\in \mathcal C^{\infty}\big(\mathbb{R}^{mn}\setminus\{(0,\dots , 0)\}\big)$ that satisfies \eqref{Coifman-Meyer}.
 Fix $0<p_i\le \infty$, $0<p\le 1$ that satisfy \eqref{indicesHolder}.
 Then the following two statements are equivalent:
 \begin{enumerate}[(a)]
 \item $T_\sigma$ maps $H^{p_1}(\mathbb R^n)\times\cdots\times H^{p_m}(\mathbb R^n)$ to $H^p(\mathbb R^n).$
 \item For all  multiindices $\alpha$ with  $|\alpha|\le \floor{n(\frac{1}{p}-1)}$ we have 
\begin{equation}\label{MulDrivCan}
 (\partial^\al_{m} \sigma)(\xi_1,\ldots,\xi_m)  =0
\end{equation}
for all $(\xi_1,\ldots,\xi_m)\in \Delta_n \setminus\{(0,\dots , 0)\}$.
 \end{enumerate}
\end{theorem}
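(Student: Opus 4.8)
\noindent\emph{Strategy.} The plan is to route both implications through the cancellation condition \eqref{HuMeng} and to exploit, in both directions, the elementary identity
\begin{equation}\label{keyident}
\int_{\R^n}x^\alpha T_\sigma(a_1,\dots,a_m)(x)\,dx
=\frac{1}{(-2\pi i)^{|\alpha|}}\sum_{\beta\le\alpha}\binom{\alpha}{\beta}
\int_{\R^{(m-1)n}}(\partial^\beta_m\sigma)\big(\xi',{-\textstyle\sum_{j<m}\xi_j}\big)\big(\partial^{\alpha-\beta}\widehat{a_m}\big)\big({-\textstyle\sum_{j<m}\xi_j}\big)\prod_{j=1}^{m-1}\widehat{a_j}(\xi_j)\,d\xi',
\end{equation}
valid for $a_j\in\mathcal O_N(\R^n)$ (with $N$ the integer for which \eqref{HuMeng} is already known to suffice) and $\xi'=(\xi_1,\dots,\xi_{m-1})$. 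I would derive it from $\int x^\alpha g\,dx=(-2\pi i)^{-|\alpha|}(\partial^\alpha\widehat g)(0)$ with $g=T_\sigma(a_1,\dots,a_m)$, substituting $\xi_m=\eta-\xi_1-\cdots-\xi_{m-1}$ in $\widehat g(\eta)$ and differentiating under the integral via the Leibniz rule. The analytic points are that $a_j$ compactly supported with many vanishing moments forces $T_\sigma(a_1,\dots,a_m)$ to decay rapidly (so $x^\alpha T_\sigma(a_1,\dots,a_m)\in L^1$ and $\widehat{T_\sigma(a_1,\dots,a_m)}$ is smooth near $0$), and that the singularity $|\partial^\beta_m\sigma(\zeta)|\lesssim|\zeta|^{-|\beta|}$ at the origin is absorbed by $|\widehat{a_j}(\xi_j)|\lesssim\min\{1,|\xi_j|^{N+1}\}$, justifying all interchanges by dominated convergence.

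\noindent\emph{$(b)\Rightarrow(a)$.} Assume \eqref{MulDrivCan}. Given $|\alpha|\le\floor{n(\frac1p-1)}$ and atoms $a_1,\dots,a_m$, note that for a.e.\ $\xi'$ the point $(\xi_1,\dots,\xi_{m-1},-\sum_{j<m}\xi_j)$ lies in $\Delta_n\setminus\{(0,\dots,0)\}$ (it is the origin only if $\xi'=0$); hence by \eqref{MulDrivCan}, applied with each $\beta\le\alpha$, every factor $(\partial^\beta_m\sigma)(\xi',-\sum_{j<m}\xi_j)$ on the right of \eqref{keyident} vanishes a.e., and the right-hand side is $0$. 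This is precisely \eqref{HuMeng}, so by the result of \cite{GNNS17} quoted above, $T_\sigma$ maps $H^{p_1}(\R^n)\times\cdots\times H^{p_m}(\R^n)$ into $H^p(\R^n)$.

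\noindent\emph{$(a)\Rightarrow(b)$.} Assume (a). Since $\mathcal O_N(\R^n)\subset H^{p_j}(\R^n)$ for every $j$, for $a_j\in\mathcal O_N(\R^n)$ the function $T_\sigma(a_1,\dots,a_m)$ lies in $H^p$; being the image of compactly supported functions with many vanishing moments under a multilinear Calder\'on-Zygmund operator, it also decays fast enough that $x^\alpha T_\sigma(a_1,\dots,a_m)\in L^1$ for $|\alpha|\le\floor{n(\frac1p-1)}$, and the standard vanishing-moment property of such $H^p$ distributions yields \eqref{HuMeng}. Inserting \eqref{HuMeng} into \eqref{keyident} and inducting on $|\alpha|$ — for $|\alpha|\ge1$ the inductive hypothesis that $\partial^\gamma_m\sigma\equiv0$ on $\Delta_n\setminus\{(0,\dots,0)\}$ for $|\gamma|<|\alpha|$ annihilates every term with $\beta\neq\alpha$ — I reduce to proving that
\begin{equation}\label{extract}
\int_{\R^{(m-1)n}}(\partial^\alpha_m\sigma)\big(\xi',{-\textstyle\sum_{j<m}\xi_j}\big)\widehat{a_m}\big({-\textstyle\sum_{j<m}\xi_j}\big)\prod_{j=1}^{m-1}\widehat{a_j}(\xi_j)\,d\xi'=0\qquad\text{for all }a_j\in\mathcal O_N(\R^n)
\end{equation}
forces $\partial^\alpha_m\sigma\equiv0$ on $\Delta_n\setminus\{(0,\dots,0)\}$.

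\noindent\emph{The main obstacle.} I expect the implication \eqref{extract}$\Rightarrow$\eqref{MulDrivCan} to be the crux. The enabling fact is that $\partial^\gamma\phi\in\mathcal O_N(\R^n)$ for every $\phi\in{\mathscr C}^\infty_{\rm c}(\R^n)$ with $|\gamma|=N+1$, and $\widehat{\partial^\gamma\phi}(\xi)=(2\pi i\xi)^\gamma\widehat\phi(\xi)$; taking $\phi_\varepsilon(x)=e^{2\pi i\eta_0\cdot x}\rho(\varepsilon x)$ with a fixed bump $\rho$, $\rho(0)\neq0$, and $|\gamma|=N+1$ chosen so that $\eta_0^\gamma\neq0$, the functions $b_\varepsilon:=\partial^\gamma\phi_\varepsilon$ lie in $\mathcal O_N(\R^n)$, satisfy $\sup_\varepsilon\|\widehat{b_\varepsilon}\|_{L^1}<\infty$, and obey $\widehat{b_\varepsilon}\rightharpoonup c\,\delta_{\eta_0}$ with $c\neq0$. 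Fixing now $\xi^0=(\xi^0_1,\dots,\xi^0_m)\in\Delta_n$ with all $\xi^0_j\neq0$, I would first pick $a_m\in\mathcal O_N(\R^n)$ with $\widehat{a_m}(\xi^0_m)\neq0$, then insert $a_j=b_{j,\varepsilon}$ (concentrating at $\xi^0_j$) for $j<m$ into \eqref{extract} and let $\varepsilon\to0$: the measures $\prod_{j<m}\widehat{b_{j,\varepsilon}}(\xi_j)\,d\xi'$ converge weakly to a nonzero multiple of $\delta_{(\xi^0_1,\dots,\xi^0_{m-1})}$, at which point $\xi'\mapsto(\partial^\alpha_m\sigma)(\xi',-\sum_{j<m}\xi_j)\widehat{a_m}(-\sum_{j<m}\xi_j)$ is continuous and bounded (its value there being a nonzero multiple of $(\partial^\alpha_m\sigma)(\xi^0)$), while the mass of these measures stays away from $\xi'=0$, where $\partial^\alpha_m\sigma$ is singular. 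This forces $(\partial^\alpha_m\sigma)(\xi^0)=0$; since such $\xi^0$ are dense in $\Delta_n\setminus\{(0,\dots,0)\}$ (their complement in $\Delta_n\cong\R^{(m-1)n}$ being a finite union of proper subspaces) and $\partial^\alpha_m\sigma$ is continuous there, \eqref{MulDrivCan} follows. The points requiring care are the uniform integrability needed to pass to the limit in \eqref{extract} — simultaneously dominating the tails of $\widehat{b_{j,\varepsilon}}$ against the polynomial growth of $\partial^\alpha_m\sigma$ and showing the region near $\xi'=0$ is negligible — together with the rapid-decay estimate for $T_\sigma(a_1,\dots,a_m)$ underpinning \eqref{HuMeng} in the step $(a)\Rightarrow(b)$.
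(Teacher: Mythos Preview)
Your overall architecture matches the paper's exactly: both directions are routed through \eqref{HuMeng} via the moment identity (your displayed formula is the paper's Lemma~\ref{lem-NoLimMul}), with \cite{GNNS17} supplying $(b)\Rightarrow(a)$ and Stein's vanishing-moment property of decaying $H^p$ functions giving \eqref{HuMeng} from $(a)$; the induction on $|\alpha|$ is also identical. The one genuine difference is how you extract $\partial^\alpha_m\sigma\equiv0$ on $\Delta_n$ from the vanishing of the integrals against all test tuples. You concentrate approximate delta measures at a chosen point of $\Delta_n$ via modulated bumps $\partial^\gamma\big(e^{2\pi i\eta_0\cdot x}\rho(\varepsilon x)\big)$ and pass to the limit $\varepsilon\to0$. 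The paper instead proves an abstract lemma (Lemma~\ref{lm-Sawa}): if $\int_{\R^n}\widehat G\,F=0$ for every compactly supported $G$ with $N$ vanishing moments and $\widehat G\in L^1$, then $F=0$ a.e.; the proof uses translation invariance (replacing $G$ by $G(\cdot-x_0)$) to upgrade the single integral condition to $(\widehat G F)^\vee\equiv0$, hence $\widehat G F=0$ a.e., and then plugs in one fixed $G$ whose Fourier transform is nonvanishing on a punctured ball (an $(N{+}1)$-fold self-convolution of the function $\zeta$ of Lemma~\ref{FuncZeta}). Applied iteratively in each variable, this gives \eqref{MulDrivCan} with no limiting analysis at all. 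Your route is more direct and avoids constructing the auxiliary $\zeta$, but it trades that for the uniform-integrability check you flagged; the paper's argument sidesteps that entirely, and has the further advantage of carrying over verbatim to the axis-singular symbols of Theorem~\ref{Thm-MulLin}, where your weak-$*$ limit would need additional care near $\Gamma(\R^{mn})$. A secondary difference: the paper establishes the moment identity through a mollifier $\varphi_\epsilon$ and a careful domain decomposition (Lemma~\ref{lem-NoLimMul}) rather than by direct differentiation under the integral sign, again to accommodate the broader symbol class.
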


We also consider symbols of the product form
\begin{equation}\label{CoifGraf}
\sigma(\xi_1,\dots, \xi_m) = \sum_{j=1}^M \sigma_{j1}(\xi_1) \cdots \sigma_{jm}(\xi_m)
\end{equation}
where the $\sigma_{jk}$'s are
Fourier transforms of sufficiently smooth Calder\'on-Zygmund kernels on $\mathbb R^n$.
For such symbols with $m=2$
it was shown in  \cite{CG} (see also \cite{GLLIXINWEI})
that the associated operators are bounded
from a product of Hardy spaces into another Hardy space
if and only if \eqref{HuMeng} holds.
For symbols of the form \eqref{CoifGraf} we   prove the following analogous result:

\begin{theorem}\label{Main-Thm2}
Let $\sigma_{jk}$, $1\le j\le M,1\le k\le m$, be Fourier transforms of Calder\'on-Zygmund kernels on $\mathbb R^n$, and let
$\sigma$ be a function on $\mathbb{R}^{mn}$ given by
 \eqref{CoifGraf}.
 Fix $0<p_i<\infty$, $0<p\le 1$ that satisfy \eqref{indicesHolder}.
 Then the following two statements are equivalent:
 \begin{enumerate}[(a)]
 \item $T_\sigma$ maps $H^{p_1}(\mathbb R^n)\times\cdots\times H^{p_m}(\mathbb R^n)$ to $H^p(\mathbb R^n).$
 \item For all  multiindices $\alpha$ with  $|\alpha|\le \floor{n(\frac{1}{p}-1)}$   condition \eqref{MulDrivCan} holds, i.e.
\begin{equation*}
 (\partial^\al_{m} \sigma)(\xi_1,\ldots,\xi_m)  =0
\end{equation*}
for all $(\xi_1,\ldots,\xi_m)\in(\mathbb{R}^{n} \setminus \{0\})^m \cap \Delta_n$.
 \end{enumerate}
\end{theorem}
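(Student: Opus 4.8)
The plan is to prove the two implications by separate arguments whose common backbone is the identity, valid for Schwartz $g_1,\dots,g_m$,
\[
\widehat{T_\sigma(g_1,\dots,g_m)}(\eta)=\int_{\R^{(m-1)n}}\sigma\big(\xi_1,\dots,\xi_{m-1},\,\eta-\textstyle\sum_{i<m}\xi_i\big)\Big(\prod_{j<m}\widehat{g_j}(\xi_j)\Big)\widehat{g_m}\big(\eta-\textstyle\sum_{i<m}\xi_i\big)\,d\xi_1\cdots d\xi_{m-1},
\]
from which Leibniz's rule in $\eta$ gives, for a multiindex $\alpha$,
\[
(\partial^\alpha\widehat{T_\sigma(g_1,\dots,g_m)})(0)=\sum_{\beta\le\alpha}\binom{\alpha}{\beta}\int_{\R^{(m-1)n}}(\partial^{\alpha-\beta}_{m}\sigma)\big(\xi_1,\dots,-\textstyle\sum_{i<m}\xi_i\big)\Big(\prod_{j<m}\widehat{g_j}(\xi_j)\Big)(\partial^\beta\widehat{g_m})\big(-\textstyle\sum_{i<m}\xi_i\big)\,d\xi.
\]
The geometric point is that in each term the symbol and its derivatives are evaluated at the point $(\xi_1,\dots,\xi_{m-1},-\sum_{i<m}\xi_i)$, which lies on $\Delta_n$ for every $(\xi_1,\dots,\xi_{m-1})$ and in $(\R^n\setminus\{0\})^m\cap\Delta_n$ for almost every such tuple. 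Since symbols of the form \eqref{CoifGraf} are not of Coifman--Meyer type, this is not a corollary of Theorem~\ref{Main-Thm}, but the scheme is parallel.

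For (b)$\Rightarrow$(a) I would invoke the sufficiency criterion of \cite{GNNS17}: it is enough to verify the moment cancellation \eqref{HuMeng} for all $a_j\in\cO_N(\R^n)$ with $N$ large. For such functions $T_\sigma(a_1,\dots,a_m)=\sum_{j=1}^M(T_{j1}a_1)\cdots(T_{jm}a_m)$, where $T_{jk}$ is the Calder\'on--Zygmund operator with multiplier $\sigma_{jk}$, is rapidly decaying (the $a_k$ have many vanishing moments and the kernels are sufficiently smooth), so $\int_{\R^n}x^\alpha T_\sigma(a_1,\dots,a_m)\,dx$ is a constant multiple of $(\partial^\alpha\widehat{T_\sigma(a_1,\dots,a_m)})(0)$. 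Taking $g_j=a_j$ in the Leibniz expansion with $|\alpha|\le\floor{n(\frac1p-1)}$ and noting $|\alpha-\beta|\le|\alpha|$, hypothesis (b) forces $(\partial^{\alpha-\beta}_m\sigma)$ to vanish at almost every evaluation point, so every integrand vanishes almost everywhere; hence $(\partial^\alpha\widehat{T_\sigma(a_1,\dots,a_m)})(0)=0$, which is \eqref{HuMeng}, and \cite{GNNS17} yields (a).

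For (a)$\Rightarrow$(b) I would test $T_\sigma$ on frequency-localized bumps. Fix $\alpha$ with $|\alpha|\le\floor{n(\frac1p-1)}$ and $(\xi_1^0,\dots,\xi_m^0)\in\Delta_n$ with all $\xi_k^0\ne0$; choose a nonnegative $\psi\in{\mathscr C}^\infty_{\rm c}(\R^n)$ with $\psi>0$ near the origin, and for small $\delta>0$ set $\widehat{f_k^\delta}(\xi)=\delta^{-n}\psi((\xi-\xi_k^0)/\delta)$. Then $f_k^\delta$ is Schwartz with Fourier transform supported away from $0$, hence $f_k^\delta\in H^{p_k}$; moreover $T_\sigma(f_1^\delta,\dots,f_m^\delta)$ is Schwartz, since for $\delta$ small the (compact) support of $\prod_k\widehat{f_k^\delta}$ avoids the origin of $\R^{mn}$, so the symbol is smooth there, while $\widehat{T_\sigma(f_1^\delta,\dots,f_m^\delta)}$ is supported in a small ball about $\eta=0$. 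By (a) this function lies in $H^p$, and a Schwartz function in $H^p$ with $0<p\le1$ has vanishing moments up to order $\floor{n(\frac1p-1)}$, so $(\partial^\alpha\widehat{T_\sigma(f_1^\delta,\dots,f_m^\delta)})(0)=0$. Putting $g_j=f_j^\delta$ into the Leibniz expansion and rescaling $\xi_j=\xi_j^0+\delta\zeta_j$, the $\beta$-term becomes a constant times $\delta^{-n-|\beta|}\int(\partial^{\alpha-\beta}_m\sigma)(\xi_1^0+\delta\zeta_1,\dots,\xi_m^0-\delta\sum_{i<m}\zeta_i)\big(\prod_{j<m}\psi(\zeta_j)\big)(\partial^\beta\psi)(-\sum_{i<m}\zeta_i)\,d\zeta$. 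I would now induct on $|\alpha|$: when $\beta\ne0$ we have $|\alpha-\beta|<|\alpha|$ and, for $\delta$ small, all evaluation points stay in $(\R^n\setminus\{0\})^m\cap\Delta_n$, where $\partial^{\alpha-\beta}_m\sigma\equiv0$ by the inductive hypothesis, so only the $\beta=0$ term survives and
\[
0=(\partial^\alpha\widehat{T_\sigma(f_1^\delta,\dots,f_m^\delta)})(0)=\delta^{-n}\int_{\R^{(m-1)n}}(\partial^\alpha_m\sigma)\big(\xi_1^0+\delta\zeta_1,\dots,\xi_m^0-\delta\textstyle\sum_{i<m}\zeta_i\big)\Big(\prod_{j<m}\psi(\zeta_j)\Big)\psi\big(-\textstyle\sum_{i<m}\zeta_i\big)\,d\zeta
\]
for all small $\delta$; letting $\delta\to0$ and using $\int_{\R^{(m-1)n}}\big(\prod_{j<m}\psi(\zeta_j)\big)\psi(-\sum_{i<m}\zeta_i)\,d\zeta>0$ forces $(\partial^\alpha_m\sigma)(\xi_1^0,\dots,\xi_m^0)=0$. (The base case $|\alpha|=0$ is the same computation with only the $\beta=0$ term present.)

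I expect the two justifications to be the delicate part. In (b)$\Rightarrow$(a) the frequency integrals must converge absolutely near the coordinate subspaces $\{\xi_k=0\}$, where $\partial^\gamma\sigma_{jk}$ may grow like $|\xi_k|^{-|\gamma|}$; this, and the legitimacy of differentiating under the integral sign, is exactly what membership in $\cO_N$ provides, since then $\widehat{a_j}$ and its derivatives vanish to order $>N$ at the origin and absorb the singularity (it is also why \cite{GNNS17} needs only $\cO_N$ as a test class, and why that criterion presupposes the separately available boundedness of $T_\sigma$ from the product of Hardy spaces into $L^p$, which for symbols \eqref{CoifGraf} comes from boundedness of the $T_{jk}$ on $H^{p_k}$). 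In (a)$\Rightarrow$(b) the subtle point is the scaling: since the $\beta$-term carries the factor $\delta^{-n-|\beta|}$, the full-derivative term $\beta=0$ is not the dominant one as $\delta\to0$, so the induction on $|\alpha|$ is essential — one strips off the lower-order derivatives $\partial^{\alpha-\beta}_m\sigma$ one order at a time before isolating $\partial^\alpha_m\sigma$.
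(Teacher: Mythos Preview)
Your argument is correct, and the implication (b)$\Rightarrow$(a) follows the paper's route exactly: both reduce to the cancellation criterion of \cite{GNNS17} via the Leibniz identity for $\partial^\alpha\widehat{T_\sigma(a_1,\dots,a_m)}(0)$, which is precisely the content of Lemma~\ref{lem-NoLimMul}.

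The implication (a)$\Rightarrow$(b), however, is handled differently. The paper factors it as (a)$\Rightarrow$\eqref{HuMeng}$\Rightarrow$(b): first, boundedness into $H^p$ together with the pointwise decay \eqref{EST-Tatoms} forces the moment cancellation \eqref{HuMeng} on $\mathcal O_N$ test inputs (via the classical fact from \cite{SteinHA} that an $H^p$ function with enough decay has vanishing moments); then the equivalence in Theorem~\ref{Thm-MulLin} is invoked, whose proof of \eqref{HuMeng}$\Rightarrow$(b) rests on Lemma~\ref{lm-Sawa} (itself requiring the auxiliary construction of the function $\zeta$ in Lemma~\ref{FuncZeta}). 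You instead bypass Theorem~\ref{Thm-MulLin} and Lemma~\ref{lm-Sawa} entirely, testing $T_\sigma$ on Schwartz inputs whose Fourier transforms are localized near a fixed point of $\Delta_n\cap(\mathbb R^n\setminus\{0\})^m$, and extracting the pointwise vanishing of $\partial_m^\alpha\sigma$ by a rescaling-and-limit argument combined with induction on $|\alpha|$. Your route is more elementary and self-contained for this implication, exploiting directly that $\sigma$ is smooth on $(\mathbb R^n\setminus\{0\})^m$; the paper's route, while requiring the extra machinery of Lemmas~\ref{FuncZeta} and~\ref{lm-Sawa}, has the advantage of isolating the equivalence (b)$\Leftrightarrow$\eqref{HuMeng} as a stand-alone statement (Theorem~\ref{Thm-MulLin}) that applies uniformly to any symbol satisfying \eqref{decay} and \eqref{EST-Tatoms}.
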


Note that for symbols of both types \eqref{Coifman-Meyer} and \eqref{CoifGraf} we always have
\begin{equation}\label{decay}
\big| \p_{1}^{\al_1} \cdots \p_{m}^{\al_m} \si(\xi_1,\ldots,\xi_m) \big| \le C_{\al_1,\dots , \al_m} |\xi_1|^{-|\al_1|}\cdots |\xi_m|^{-|\al_m|}
\end{equation}
for all $\alpha_j\in\N_0^n$ and all $\xi_j\in \mathbb R^n$, $j=1,\dots , m$,
under the assumption that $|\alpha_j|>0$ if $\xi_j\neq 0$.
It turns out that condition \eqref{decay} suffices for
the purposes of proving the equivalence between (a) and (b) in both Theorems \ref{Main-Thm} and \ref{Main-Thm2},
although it is not strong enough to imply  boundedness on any product of Lebesgue spaces (see \cite{GLKT02}).

\begin{remark}
By symmetry, we note that in condition \eqref{MulDrivCan} the derivative $\p_{m}^\al$ can be replaced 
by $\p_{k}^\al$ for any $k\in \{1 , \dots , m-1\}$
in Theorems \ref{Main-Thm} and \ref{Main-Thm2}.
\end{remark}

Boundedness into $H^p({\mathbb R}^n)$ for operators $T_\si$
is often expressed in terms of cancellation of the action of the operator on tuples of atoms. Let $x^\al= x_1^{i_1}
\cdots x_n^{i_n}$ if $\al=(i_1,\dots , i_n)$. 
In order for the integral
$$
\int_{\mathbb R^n}x^\alpha T_\si(a_1, \dots , a_m)(x) \, dx
$$
to be absolutely convergent, it is necessary  for $T_\si(a_1, \dots , a_m)(x)$ to have  
decay, where $a_j$ are $(p_j,\infty)$-atoms. Precisely, we assume that for any $m$-tuple of
$(p_j,\infty)$-atom $a_j$ there exists function $b\in L^{p_j}(\mathbb{R}^n)$
which decays like $|x|^{-m n-N-1}$ as $|x|\to \infty$, such that
for all $x\in \mathbb R^n$
\begin{equation}\label{EST-Tatoms}
|T_\si(a_{1}, \dots , a_{m})(x)| \lesssim
b(x).
\end{equation}

We note that condition \eqref{EST-Tatoms} is valid for a large class of multilinear operators
such as those in Theorems \ref{Main-Thm} and \ref{Main-Thm2}.
Indeed, for operators with symbols of the form \eqref{CoifGraf} we can take
\begin{equation}\label{eq:170222-1}
b(x)
=
\sum_{j=1}^{M}\prod_{k=1}^m\Big[|T_{\si_{j k}} (a_k)(x)| \chi_{Q_j^*}(x) +
\f{
|Q_{k}|^{1-\f{1}{p_k} +\f{N+1}{nm}}\chi_{(Q_k^*)^c}(x)
}
{
\big(|x-c_{k}|+\ell(Q_{k}) \big)^{n+\f{N+1}{m}}
}\Big],
\end{equation}
where $Q_k$ is a cube that contains the support of $a_k$,
$\ell(Q_k)$ denotes the length of $Q_k$.

Condition \eqref{EST-Tatoms} is also valid for
Coifman-Meyer multipliers \eqref{Coifman-Meyer}.
Indeed, we can choose
\begin{equation}\label{eq:170222-2}
b(x) = |T_{\sigma}(a_1,\ldots,a_m)(x)|\chi_{\cup_{k=1}^mQ_k^*}(x) + \prod_{k=1}^m
\f{
|Q_{k}|^{1-\f{1}{p_k} +\f{N+1}{nm}}\chi_{(Q_k^*)^c}(x)
}
{
\big(|x-c_{k}|+\ell(Q_{k}) \big)^{n+\f{N+1}{m}}
}.
\end{equation}
See \cite{GNNS17} for estimates
(\ref{eq:170222-1}) and (\ref{eq:170222-2}).

To state the main equivalence result between cancellation
of multipliers and cancellation of the
action of an operator on $m$ tuples of atoms
we introduce some notation.
For $0<\epsilon<1$ and $1\le i\le m$, we denote
\begin{equation}\label{eq-Gami}
\Gamma_{i,\epsilon}(\mathbb{R}^{mn}) = \{(\xi_1,\ldots,\xi_m)\in\mathbb{R}^{mn}\ :\ |\xi_i|\le \epsilon \},\quad
\Gamma_{\epsilon} (\mathbb{R}^{mn})= \bigcup_{i=1}^m\Gamma_{i,\epsilon}(\mathbb{R}^{mn}).
\end{equation}
We also define sets
\begin{equation}\label{eq-Gami2}
\Gamma_{i}(\mathbb{R}^{mn}) = \{(\xi_1,\ldots,\xi_m)\in\mathbb{R}^{mn}\ :\ \xi_i=0 \},\quad
\Gamma(\mathbb{R}^{mn}) = \bigcup_{i=1}^m\Gamma_{i}(\mathbb{R}^{mn}).
\end{equation}

We will derive both Theorems~\ref{Main-Thm} and ~\ref{Main-Thm2} via the following general result.

\begin{theorem}\label{Thm-MulLin}
 Let $\sigma$   in $L^\infty(\mathbb{R}^{mn} ) \cap \mathcal C^{\infty}\big(\mathbb{R}^{mn}\setminus \Gamma(\mathbb{R}^{mn})\big)$   satisfy \eqref{decay}.
 Assume that $T_\sigma $ satisfies \eqref{EST-Tatoms}
for  all $a_j\in \mathcal{O}_N(\mathbb{R}^n)$ and
$$0<p_j<\infty, \quad 1\le j\le m , \quad 0<p\le 1, \quad \frac{1}{p}=\frac{1}{p_1}+\cdots+\frac{1}{p_m}.$$
Then the following two statements are equivalent:
\begin{enumerate}
\item[(a)] For all  multiindices $\alpha$ with  $|\alpha|\le \floor{n(\frac{1}{p}-1)}$  condition \eqref{MulDrivCan} holds, i.e.
\begin{equation*}
 (\partial^\al_{m} \sigma)(\xi_1,\ldots,\xi_m)  =0
,\quad \forall\, (\xi_1,\ldots,\xi_m)\in \Delta_n\setminus\Gamma(\mathbb{R}^{mn}).
\end{equation*}
\item[(b)]
For all
$a_i\in \mathcal{O}_N(\mathbb{R}^n)$, $1\le i\le m$, condition \eqref{HuMeng} holds, i.e.
\begin{equation*}
\int_{\R^n}x^\alpha
T_{\sigma}(a_1,\ldots,a_m)(x)\, dx=0
\end{equation*}
for all $\alpha$ with
$|\alpha|\leq \floor{n(\frac{1}{p}-1)}$.
\end{enumerate}
\end{theorem}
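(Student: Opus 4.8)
The plan is to connect the two conditions through a single ``master identity'' relating the moments $\int x^\alpha T_\sigma(\vec a)\,dx$ to the values of $\partial^\gamma_m\sigma$ on $\Delta_n$. Fix $\vec a=(a_1,\dots,a_m)$ with $a_j\in\mathcal{O}_N(\mathbb{R}^n)$, put $L=\floor{n(\frac1p-1)}$, and let $|\alpha|\le L$. By \eqref{EST-Tatoms} both $T_\sigma(\vec a)$ and $x^\alpha T_\sigma(\vec a)$ lie in $L^1(\mathbb{R}^n)$, so $\widehat{T_\sigma(\vec a)}\in C^{L}$ and $\partial^\alpha\widehat{T_\sigma(\vec a)}(0)=(-2\pi i)^{|\alpha|}\int x^\alpha T_\sigma(\vec a)\,dx$. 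Writing out $T_\sigma$ and using Fubini (legitimate since $\sigma\in L^\infty$ and each $\widehat{a_k}$ is Schwartz) gives $\widehat{T_\sigma(\vec a)}(\xi)=\int_{\mathbb{R}^{(m-1)n}}\sigma(\xi_1,\dots,\xi_{m-1},\xi-\sum_{k<m}\xi_k)\prod_{k<m}\widehat{a_k}(\xi_k)\,\widehat{a_m}(\xi-\sum_{k<m}\xi_k)\,d\xi'$; since the $\xi$-dependence sits only in the last slot of $\sigma$ and in $\widehat{a_m}$, differentiating under the integral and setting $\xi=0$ produces, by the Leibniz rule,
\begin{equation}\label{eq:sketch-master}
(-2\pi i)^{|\alpha|}\!\int_{\mathbb{R}^n}\!x^\alpha T_\sigma(\vec a)\,dx=\sum_{\gamma\le\alpha}\binom{\alpha}{\gamma}\!\int_{\mathbb{R}^{(m-1)n}}\!(\partial^{\gamma}_m\sigma)\Big(\xi_1,\dots,\xi_{m-1},{-}\!\sum_{k<m}\xi_k\Big)\!\Big(\prod_{k<m}\widehat{a_k}(\xi_k)\Big)(\partial^{\alpha-\gamma}\widehat{a_m})\Big({-}\!\sum_{k<m}\xi_k\Big)d\xi'.
\end{equation}
The differentiation under the integral is justified by dominated convergence: the worst singularity $|{-}\sum_{k<m}\xi_k|^{-|\gamma|}$ of $\partial^\gamma_m\sigma$ allowed by \eqref{decay} is absorbed by the vanishing of $\partial^{\alpha-\gamma}\widehat{a_m}$ to order $N+1-|\alpha-\gamma|$ at the origin (and $N+1-|\alpha|\ge1$), so the differentiated integrand is dominated, for $|\xi|\le1$, by $C\prod_{k<m}(1+|\xi_k|)^{-n-1}$. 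Note that in every term the argument of $\partial^\gamma_m\sigma$ ranges over $\Delta_n$, and the set of $\xi'$ for which it meets $\Gamma(\mathbb{R}^{mn})$ is a finite union of affine subspaces, hence Lebesgue null.

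The implication (a)$\Rightarrow$(b) is then immediate: for $|\alpha|\le L$ every $\gamma\le\alpha$ satisfies $|\gamma|\le L$, so by (a) each $\partial^\gamma_m\sigma$ vanishes on $\Delta_n\setminus\Gamma(\mathbb{R}^{mn})$ and hence every term on the right of \eqref{eq:sketch-master} vanishes. For (b)$\Rightarrow$(a) I would argue by induction on $k=|\alpha|$, for $k=0,1,\dots,L$, that $\partial^\alpha_m\sigma=0$ on $\Delta_n\setminus\Gamma(\mathbb{R}^{mn})$ whenever $|\alpha|=k$. Granting this for all smaller values, all terms with $|\gamma|<|\alpha|$ in \eqref{eq:sketch-master} drop out, and since the left side is $0$ by (b) we are reduced to
\begin{equation}\label{eq:sketch-peeled}
\int_{\mathbb{R}^{(m-1)n}}(\partial^\alpha_m\sigma)\Big(\xi_1,\dots,\xi_{m-1},{-}\!\sum_{k<m}\xi_k\Big)\Big(\prod_{k<m}\widehat{a_k}(\xi_k)\Big)\,\widehat{a_m}\Big({-}\!\sum_{k<m}\xi_k\Big)\,d\xi'=0
\end{equation}
for all $a_k\in\mathcal{O}_N(\mathbb{R}^n)$.

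To extract pointwise vanishing of $\partial^\alpha_m\sigma$ from \eqref{eq:sketch-peeled}, suppose $(\partial^\alpha_m\sigma)(\xi^0)\neq0$ at some $\xi^0=(\xi^0_1,\dots,\xi^0_m)\in\Delta_n\setminus\Gamma(\mathbb{R}^{mn})$. For each $k$ choose a coordinate $j_k$ with $(\xi^0_k)_{j_k}\neq0$ and set $\gamma_k=(N+1)e_{j_k}$; fix $\psi\in\mathscr{C}^\infty_{\rm c}(\mathbb{R}^n)$ even, nonnegative, with $\int\psi\neq0$, and put $\omega=\psi*\psi$, so that $\omega\in\mathscr{C}^\infty_{\rm c}(\mathbb{R}^n)$, $\omega\ge0$, $\widehat{\omega}=\widehat\psi^{\,2}\ge0$, and $\int\omega^m>0$. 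For $R\ge1$ let $a_k^{(R)}(x)=\partial^{\gamma_k}\big(e^{2\pi ix\cdot\xi^0_k}\omega(x/R)\big)$; these lie in $\mathcal{O}_N(\mathbb{R}^n)$ (pure derivatives of order $N+1$) and $\widehat{a_k^{(R)}}(\zeta)=(2\pi i)^{N+1}\zeta^{\gamma_k}R^n\widehat{\omega}(R(\zeta-\xi^0_k))$. Inserting these into \eqref{eq:sketch-peeled}, substituting $\zeta_k=R(\xi_k-\xi^0_k)$ — so that $-\sum_{k<m}\xi_k=\xi^0_m-\frac1R\sum_{k<m}\zeta_k$ and $R({-}\sum_{k<m}\xi_k-\xi^0_m)=-\sum_{k<m}\zeta_k$ — and dividing by the nonzero factor $(2\pi i)^{(N+1)m}R^n$, one finds that for every $R\ge1$
\[
\int_{\mathbb{R}^{(m-1)n}}(\partial^\alpha_m\sigma)\Big(\xi^0_1{+}\tfrac{\zeta_1}{R},\dots,\xi^0_m{-}\tfrac1R\!\sum_{k<m}\!\zeta_k\Big)\prod_{k<m}\Big(\xi^0_k{+}\tfrac{\zeta_k}{R}\Big)^{\gamma_k}\Big(\xi^0_m{-}\tfrac1R\!\sum_{k<m}\!\zeta_k\Big)^{\gamma_m}\prod_{k<m}\widehat{\omega}(\zeta_k)\,\widehat{\omega}\Big({-}\!\sum_{k<m}\!\zeta_k\Big)d\zeta'=0.
\]
Letting $R\to\infty$ and using dominated convergence — the integrand converges pointwise by continuity of $\partial^\alpha_m\sigma$ at the interior point $\xi^0$ of $\mathbb{R}^{mn}\setminus\Gamma(\mathbb{R}^{mn})$, while \eqref{decay} gives $|(\partial^\alpha_m\sigma)(\cdots)|\le C|\xi^0_m-\frac1R\sum\zeta_k|^{-|\alpha|}$, so the $\sigma$-factor times the monomials is $\lesssim(1+|\zeta'|)^{(m-1)(N+1)+N+1-|\alpha|}$ (recall $N+1-|\alpha|\ge1$), dominated by the rapid decay of $\prod_{k<m}\widehat{\omega}(\zeta_k)\widehat{\omega}({-}\sum\zeta_k)$ — we conclude that $(\partial^\alpha_m\sigma)(\xi^0)\big(\prod_{k<m}(\xi^0_k)^{\gamma_k}\big)(\xi^0_m)^{\gamma_m}\int_{\mathbb{R}^{(m-1)n}}\prod_{k<m}\widehat{\omega}(\zeta_k)\widehat{\omega}({-}\sum\zeta_k)\,d\zeta'=0$. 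The last integral equals $\int_{\mathbb{R}^n}\omega(t)^m\,dt>0$ by Fourier inversion, and each $(\xi^0_k)^{\gamma_k}=((\xi^0_k)_{j_k})^{N+1}\neq0$, so $(\partial^\alpha_m\sigma)(\xi^0)=0$, a contradiction; this completes the induction. (For $m=1$ the statement is trivial: (a) is vacuous and (b) holds automatically, since $\widehat{a_1}$ vanishes to order $N+1>L\ge|\alpha|$ at the origin while $\sigma$ is bounded.)

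I expect the technical core to be this last step: one must produce test atoms in $\mathcal{O}_N(\mathbb{R}^n)$ whose Fourier transforms concentrate at arbitrary prescribed nonzero points $\xi^0_k$ and yet — being forced by their vanishing moments to vanish to order $N+1$ at the origin — still tame the singularity of $\partial^\alpha_m\sigma$ along $\Gamma(\mathbb{R}^{mn})$; the monomial weight $({-}\sum_{k<m}\xi_k)^{\gamma_m}$ carried by $\widehat{a_m}$ is precisely what achieves this, and it is here that hypothesis \eqref{decay} is genuinely needed. By contrast the master identity \eqref{eq:sketch-master} and the direction (a)$\Rightarrow$(b) are routine once the $L^1$ decay provided by \eqref{EST-Tatoms} is available.
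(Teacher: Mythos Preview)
Your proof is correct and reaches the same master identity as the paper's Lemma~\ref{lem-NoLimMul}, but both the derivation of that identity and the direction (b)$\Rightarrow$(a) follow genuinely different routes. For the master identity the paper regularizes: it pairs $\widehat{T_\sigma(a_1,\dots,a_m)}$ with $\partial^\alpha[\varphi_\epsilon]$, splits the outer integral into the neighborhoods $\Gamma_{i,\epsilon}$, $\Delta^{m-1}_\epsilon$ of the singular hyperplanes and their complement $\Sigma^1_\epsilon$, shows the bad pieces vanish as $\epsilon\to0$ via the moment conditions, and integrates by parts only on the good piece before passing to the limit. You instead differentiate directly under the integral sign, using that the vanishing of $\partial^{\alpha-\gamma}\widehat{a_m}$ to order $N{+}1-|\alpha{-}\gamma|$ at the origin absorbs the $|\xi_m|^{-|\gamma|}$ singularity of $\partial_m^\gamma\sigma$ allowed by \eqref{decay}. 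For (b)$\Rightarrow$(a) the paper proves a separate density lemma (Lemma~\ref{lm-Sawa}): if $\int\widehat G\,F=0$ for all compactly supported $G$ with vanishing moments up to order $N$, then $F\equiv0$ a.e.; its proof exploits translation invariance of the test class together with a special $\zeta\in\mathcal C^{\infty}_0$ whose Fourier transform vanishes only at the origin (Lemma~\ref{FuncZeta}), and then iterates through the variables. You bypass this with an explicit concentration argument, choosing atoms $a_k^{(R)}=\partial^{(N+1)e_{j_k}}\!\big(e^{2\pi ix\cdot\xi_k^0}\omega(x/R)\big)$ whose Fourier transforms approximate point masses at the $\xi_k^0$ and letting $R\to\infty$; the monomial factor $\big(\xi_m^0-\tfrac1R\sum\zeta_k\big)^{\gamma_m}$ carried by $\widehat{a_m^{(R)}}$ once again neutralizes the singularity of $\partial_m^\alpha\sigma$ during the dominated-convergence limit. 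Your approach is more self-contained---no auxiliary lemmas, no special-function construction---while the paper's is more modular: once Lemma~\ref{lm-Sawa} is available each induction step reduces to a one-line appeal to it.
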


Throughout this paper,
we denote multiindices by letters $\alpha$, $\beta$, $\gamma$, etc and use the abbreviation $\alpha \le \beta$ to denote that $\alpha_j\le \beta_j$ for all $j$ if $\al=(\al_1,\dots , \al_n)$ and $\be=(\be_1,\dots , \be_n)$.
We also let $C$ denote a   constant independent of crucial parameters  whose value may vary on different occurrences.

\section{The linear case}
In the linear case,
assumption \eqref{HuMeng} holds automatically via the following lemma:
\begin{lemma}
For any $a\in \mathcal{O}_N(\mathbb{R}^n)$ and $|\alpha|\leq N$,
we have that
\begin{equation*}
\int_{\R^n}
x^\alpha
T_\si(a)(x)dx=0.
\end{equation*}
\end{lemma}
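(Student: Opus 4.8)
The key idea is that membership in $\mathcal{O}_N(\mathbb{R}^n)$ forces the Fourier transform $\widehat{a}$ to vanish to high order at the origin, which in turn kills the relevant moments of $T_\si(a)$. Concretely, I will first observe that the quantity $\int_{\R^n} x^\al T_\si(a)(x)\,dx$ is, up to a constant multiple of $(2\pi i)^{-|\al|}$, the value of $\partial^\al\big(\widehat{T_\si(a)}\big)$ at $\xi=0$, provided this derivative makes sense; and in the linear case $\widehat{T_\si(a)}(\xi)=\si(\xi)\widehat{a}(\xi)$. So the claim reduces to showing that $\partial^\al\big(\si(\xi)\widehat{a}(\xi)\big)\big|_{\xi=0}=0$ for $|\al|\le N$.

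For the second step I will use that $a\in\mathcal{O}_N(\mathbb{R}^n)\subset \mathscr{C}^\infty_{\rm c}(\mathbb{R}^n)$ has vanishing moments $\int x^\be a(x)\,dx=0$ for all $|\be|\le N$, which is exactly the statement that $(\partial^\be\widehat{a})(0)=0$ for $|\be|\le N$; equivalently $\widehat{a}(\xi)=O(|\xi|^{N+1})$ near the origin. Since $a$ has compact support, $\widehat a$ is smooth (in fact entire), so this is a genuine Taylor-expansion statement. The symbol $\si$ satisfies the decay bound \eqref{decay} (here in the linear form $|\partial^\ga\si(\xi)|\le C_\ga|\xi|^{-|\ga|}$ for $\xi\ne0$), so each derivative $\partial^\ga\si$ blows up at worst like $|\xi|^{-|\ga|}$. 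Applying the Leibniz rule to $\partial^\al(\si\,\widehat a)$ and evaluating near $\xi=0$, each term is a product $\partial^{\ga}\si(\xi)\,\partial^{\al-\ga}\widehat a(\xi)$ with $|\ga|\le|\al|\le N$; the factor $\partial^{\al-\ga}\widehat a(\xi)$ vanishes to order at least $N+1-|\al-\ga|=N+1-|\al|+|\ga|$ at the origin, which dominates the singularity $|\xi|^{-|\ga|}$ of $\partial^\ga\si$ since $(N+1-|\al|+|\ga|)-|\ga|=N+1-|\al|\ge 1>0$. Hence the product tends to $0$ as $\xi\to0$, and therefore $\partial^\al(\si\widehat a)$ extends continuously by $0$ at the origin.

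The remaining, and slightly delicate, step is to justify that $\int_{\R^n} x^\al T_\si(a)(x)\,dx$ really equals this limit of $\partial^\al(\si\widehat a)$ at the origin — i.e.\ that the moment is absolutely convergent and the Fourier-analytic identification is legitimate. This is where condition \eqref{EST-Tatoms} (or its explicit form \eqref{eq:170222-2} specialized to $m=1$) enters: it gives the pointwise bound $|T_\si(a)(x)|\lesssim b(x)$ with $b$ decaying like $|x|^{-n-N-1}$, so $x^\al T_\si(a)(x)$ is integrable for $|\al|\le N$. One then argues, e.g.\ by a regularization/cutoff of the integral $\int \chi(\delta x)x^\al T_\si(a)(x)\,dx$ together with Plancherel and the smoothness of $\si\widehat a$ away from $0$, that this integral equals a constant times $\partial^\al(\si\widehat a)(0)=0$.

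**Main obstacle.** The one genuine subtlety is the distributional bookkeeping near $\xi=0$: $\si$ itself need only be bounded (not smooth) at the origin, so $\si\widehat a$ is not a priori $\mathscr{C}^N$ there, and one must verify — using precisely the balance $N+1-|\al|\ge1$ from \eqref{decay} — that all the Leibniz terms with a derivative landing on $\si$ are still controlled, so that $\partial^\al(\si\widehat a)$ extends to a continuous function vanishing at $0$ and the moment identity goes through. Everything else is routine.
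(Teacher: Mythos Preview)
Your argument is correct, but it takes a genuinely different route from the paper's proof. The paper introduces an approximate identity $\varphi_\epsilon$ on the Fourier side and writes
\[
\partial^\alpha\big[\widehat{T_\si(a)}\big](0)
=\lim_{\epsilon\to 0}(-1)^{|\alpha|}\int_{\R^n}\si(\xi)\,\widehat{a}(\xi)\,\partial^\alpha[\varphi_\epsilon](\xi)\,d\xi,
\]
placing \emph{all} of the $\partial^\alpha$ on $\varphi_\epsilon$ via integration by parts. It then uses only $\|\si\|_{L^\infty}$ together with $\widehat{a}(\xi)=O(|\xi|^{N+1})$ to see that the integrand is $O(\epsilon)$ on the support $|\xi|\lesssim\epsilon$, and the limit vanishes. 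No Leibniz expansion of $\si\widehat{a}$ and no symbol derivative estimate \eqref{decay} are used at all.

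By contrast, you expand $\partial^\alpha(\si\widehat{a})$ by Leibniz for $\xi\ne 0$ and balance the blow-up $|\partial^\gamma\si(\xi)|\lesssim|\xi|^{-|\gamma|}$ from \eqref{decay} against the extra vanishing of $\partial^{\alpha-\gamma}\widehat{a}$. This is legitimate here since the ambient hypotheses include \eqref{decay}, and your continuity argument (using \eqref{EST-Tatoms} to get $x^\alpha T_\si(a)\in L^1$, hence $\partial^\alpha\widehat{T_\si(a)}$ continuous, so its value at $0$ equals the limit you computed) closes the loop. The trade-off: the paper's approach is more economical in hypotheses (boundedness of $\si$ suffices) and, importantly, the $\varphi_\epsilon$ device it introduces is exactly what gets reused in the bilinear and multilinear Lemmas~\ref{lem-NoLim} and~\ref{lem-NoLimMul}, where a straight Leibniz expansion is unavailable because $\si$ is singular on the full set $\Gamma(\R^{mn})$. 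Your approach is more direct for this linear lemma in isolation, but it does not generalize as cleanly.
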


\begin{proof}
We write
$$
\left|
\int_{\R^n}
(-2\pi ix)^\alpha
T_\si(a)(x)dx
\right|
 =
\left|
\partial^\alpha
\left[
\widehat{
T_\si(a)
}
\right]
(0)
\right|
 =
\lim_{\epsilon\to0}
\left|
\int_{\R^n}
\sigma(\xi)
\widehat{a}(\xi)
\partial^\alpha
[\varphi_{\epsilon}](\xi)
d\xi
\right|
$$
integrating by parts.
Now, we notice that
by the Taylor expansion
and
the vanishing moments of $a$,
\begin{equation*}
\widehat{a}(\xi)
=
\sum_{\beta\leq N}
C_{\beta}
\partial^{\beta}\widehat{a}(0)\xi^\beta
+
{\rm O}(|\xi|^{N+1})
=
{\rm O}(|\xi|^{N+1})
\end{equation*}
as $|\xi| \to 0$.
Hence, we see that
\begin{align*}
\left|
\int_{\R^n}
(-2\pi ix)^\alpha
T_\si(a)(x)dx
\right|
&\leq
C_{\alpha}
\lim_{\epsilon\to0}
\int_{Q(0,\epsilon)}
\left|
\sigma(\xi)
|\xi|^{|\alpha|+1}
\partial^\alpha[\varphi_{\epsilon}](\xi)]
\right|d\xi\\
&\leq
C_{\alpha}\lim_{\epsilon\to0}
\epsilon
\int_{Q(0,\epsilon)}
\left|
\sigma(\xi)
[\partial^\alpha\varphi]_{\epsilon}(\xi)
\right|d\xi\\
&\leq
C_{\alpha}\lim_{\epsilon\to0}
\epsilon
\|\sigma\|_{L^\infty}
\|\partial\varphi\|_{L^1}
=0.
\end{align*}
\end{proof}

As a result, 
the linear Fourier multipliers satisfying the suitable decay condition map
product of Hardy spaces into Hardy spaces 
as is well known.

\section{The bilinear case}

For the sake of clarity of exposition, we  first discuss the bilinear case
of Theorem~\ref{Thm-MulLin}.

\begin{theorem}\label{Thm-BiLin}
Let
$\sigma\in L^\infty(\R^n\times\R^n)
\cap
\mathcal{C}^\infty(\R^n\times\R^n\setminus \{(\xi,\eta): \,\, |\xi || \eta|=0\} )$
satisfy
$(\ref{decay})$ so that $T_\sigma$ satisfies $\eqref{EST-Tatoms}$.
Then for a given $N\in \N_0$ the following conditions are equivalent:
\begin{enumerate}
\item[(a)]
For all $\alpha \in\N^n_0$ with
$| \alpha|\leq N$
and
$\xi_1\in\R^n\setminus\{0\}$, we have
\begin{equation}\label{eq-CanDrvSigma}
\partial^\alpha_{2}
\sigma(\xi_1,-\xi_1)=0.
\end{equation}
\item[(b)]
For any smooth functions
$a_1,a_2 \in \mathcal{O}_N(\mathbb{R}^n)$,
\begin{equation}\label{eq-CanclT}
\int_{\R^n}x^\alpha
T_\si(a_1,a_2)(x)dx
=0,
\quad
\forall \quad |\alpha|\leq N.
\end{equation}
\end{enumerate}
\end{theorem}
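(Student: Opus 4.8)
The plan is to reduce both implications to a single algebraic identity expressing the moments of $T_\sigma(a_1,a_2)$ through the restriction of $\sigma$ and its $\xi_2$-derivatives to the antidiagonal. Write
\[
F(\zeta)=\int_{\R^n}\sigma(\xi,\zeta-\xi)\,\widehat{a_1}(\xi)\,\widehat{a_2}(\zeta-\xi)\,d\xi
\]
for the Fourier transform of $T_\sigma(a_1,a_2)$; since $\widehat{a_1},\widehat{a_2}\in\mathcal S$ and $\sigma\in L^\infty$ this integral is absolutely convergent, $T_\sigma(a_1,a_2)$ is a bounded continuous function, and by $\eqref{EST-Tatoms}$ it decays fast enough that $x^\alpha T_\sigma(a_1,a_2)\in L^1(\R^n)$ for $|\alpha|\le N$. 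Mimicking the regularization of the linear case above — inserting a cutoff $\varphi_\epsilon(x)=\varphi(\epsilon x)$ with $\varphi\in\mathcal S$, $\varphi(0)=1$, using Fubini, an integration by parts on the frequency side, the change of variables $\zeta=\xi+\eta$, and letting $\epsilon\to0$ — I would obtain
\[
\int_{\R^n}x^\alpha T_\sigma(a_1,a_2)(x)\,dx=(-2\pi i)^{-|\alpha|}(\partial^\alpha F)(0),\qquad |\alpha|\le N .
\]
The delicate point here is that $F\in\mathcal C^N$ near the origin despite the two merging singular loci $\xi=0$ and $\xi=\zeta$ of the integrand: differentiating under the integral sign, a $\zeta$-derivative of order $\le N$ of the integrand is a finite sum of terms $(\partial^\beta_2\sigma)(\xi,\zeta-\xi)\,\widehat{a_1}(\xi)\,(\partial^\gamma\widehat{a_2})(\zeta-\xi)$, and the bound $|\partial^\beta_2\sigma(\xi,\eta)|\lesssim|\eta|^{-|\beta|}$ supplied by $\eqref{decay}$ together with the Taylor vanishing $\widehat{a_j}(\xi)={\rm O}(|\xi|^{N+1})$ (valid since $a_j\in\mathcal O_N(\R^n)$) makes each such term dominated, uniformly for $\zeta$ near $0$, by a fixed $L^1_\xi$ function; evaluating at $\zeta=0$ then yields
\[
(\partial^\alpha F)(0)=\sum_{\beta+\gamma=\alpha}\binom{\alpha}{\beta}\int_{\R^n}(\partial^\beta_2\sigma)(\xi,-\xi)\,\widehat{a_1}(\xi)\,(\partial^\gamma\widehat{a_2})(-\xi)\,d\xi .
\]

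Granting this identity, the implication (a)$\Rightarrow$(b) is immediate: if $\partial^\beta_2\sigma(\xi_1,-\xi_1)=0$ for all $\xi_1\ne0$ and all $|\beta|\le N$, then for $|\alpha|\le N$ every integrand above vanishes for almost every $\xi$, so $(\partial^\alpha F)(0)=0$ and $\eqref{eq-CanclT}$ holds.

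For (b)$\Rightarrow$(a) I would induct on $|\alpha|$. Assume $\eqref{eq-CanclT}$ for all $a_1,a_2\in\mathcal O_N(\R^n)$, i.e.\ $(\partial^\alpha F)(0)=0$ for every such pair and every $|\alpha|\le N$, and assume inductively that $\partial^\beta_2\sigma(\xi_1,-\xi_1)=0$ for all $\xi_1\ne0$ whenever $|\beta|<|\alpha|$. Then every term with $\gamma\ne0$ in the identity for $(\partial^\alpha F)(0)$ drops out, leaving
\[
\int_{\R^n}(\partial^\alpha_2\sigma)(\xi,-\xi)\,\widehat{a_1}(\xi)\,\widehat{a_2}(-\xi)\,d\xi=0\qquad\text{for all }a_1,a_2\in\mathcal O_N(\R^n).
\]
Put $G(\xi)=(\partial^\alpha_2\sigma)(\xi,-\xi)$; by the smoothness of $\sigma$ off $\{|\xi||\eta|=0\}$ and $\eqref{decay}$, $G$ is continuous on $\R^n\setminus\{0\}$ with $|G(\xi)|\lesssim|\xi|^{-|\alpha|}$. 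Fix $\xi_0\ne0$, choose $\rho\in\mathscr C^\infty_{\rm c}(\R^n)$ with $\rho\not\equiv0$, set $\phi_\delta(x)=e^{2\pi i\xi_0\cdot x}\rho(\delta x)$, and take the test atoms $a_1=a_1^{(\delta)}:=\Delta^{N+1}\phi_\delta$ and $a_2=\overline{a_1^{(\delta)}}$, both of which lie in $\mathcal O_N(\R^n)$ since $\Delta^{N+1}$ annihilates all moments of order $\le 2N+1$. Then $\widehat{a_1^{(\delta)}}(\xi)\,\widehat{a_2}(-\xi)=\big|\widehat{a_1^{(\delta)}}(\xi)\big|^2=(4\pi^2|\xi|^2)^{2N+2}\,\delta^{-2n}\big|\widehat\rho\big((\xi-\xi_0)/\delta\big)\big|^2\ge0$, and $\delta^{-n}\big|\widehat\rho\big((\xi-\xi_0)/\delta\big)\big|^2$ is an approximate identity at $\xi_0$ of total mass $\|\widehat\rho\|_{L^2}^2>0$. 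Multiplying the vanishing integral by $\delta^n$ and letting $\delta\to0$ — licit because $G(\xi)(4\pi^2|\xi|^2)^{2N+2}$ is continuous near $\xi_0$ while the decay of $G$ and the rapid decay of $\widehat\rho$ render the contribution away from $\xi_0$ negligible — forces $G(\xi_0)\,(4\pi^2|\xi_0|^2)^{2N+2}\|\widehat\rho\|_{L^2}^2=0$, hence $G(\xi_0)=0$. Since $\xi_0\ne0$ was arbitrary, $\partial^\alpha_2\sigma(\xi_1,-\xi_1)=0$ for all $\xi_1\ne0$, which closes the induction and gives $\eqref{eq-CanDrvSigma}$.

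I expect the main obstacle to be the first step: the identity $\int x^\alpha T_\sigma(a_1,a_2)\,dx=(-2\pi i)^{-|\alpha|}(\partial^\alpha F)(0)$, that is, justifying the $\varphi_\epsilon$-limit and, within it, the $\mathcal C^N$-regularity of $F$ at the origin. That is exactly where the quantitative decay $\eqref{decay}$ of the $\xi_2$-derivatives of $\sigma$ — not merely its boundedness — is essential, since it must absorb the blow-up of $(\partial^\beta_2\sigma)(\xi,\zeta-\xi)$ as $\zeta-\xi\to0$, the remaining slack being provided by the vanishing moments of $a_1$ and $a_2$. Once this reduction is in place, (a)$\Rightarrow$(b) is trivial and (b)$\Rightarrow$(a) is the routine concentration argument above.
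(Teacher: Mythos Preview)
Your proposal is correct and follows the paper's architecture for the key identity but diverges in the final step.

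For the reduction to the antidiagonal identity
\[
\int_{\R^n}(-2\pi i x)^\alpha T_\sigma(a_1,a_2)(x)\,dx
=\sum_{\beta\le\alpha}\binom{\alpha}{\beta}\int_{\R^n}\widehat{a_1}(\xi)\,\partial^{\alpha-\beta}\widehat{a_2}(-\xi)\,\partial^\beta_2\sigma(\xi,-\xi)\,d\xi,
\]
the paper proves exactly this formula (Lemma~\ref{lem-NoLim}) using the same $\varphi_\epsilon$-regularization you describe, splitting the $\xi_1$-integral at the scale $|\xi_1|\sim\epsilon$ to separate the two singular loci. Your framing in terms of $F\in\mathcal C^N$ is equivalent and your domination argument is valid: the product $(\partial^\beta_2\sigma)(\xi,\zeta-\xi)\,\partial^{\gamma}\widehat{a_2}(\zeta-\xi)$ is controlled by $|\zeta-\xi|^{N+1-|\alpha|}$ near $\xi=\zeta$ thanks to the vanishing moments of $a_2$, while the factor $\widehat{a_1}(\xi)=O(|\xi|^{N+1})$ kills the other singularity; both match what the paper does in Lemma~\ref{lm-axis}. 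The implication (a)$\Rightarrow$(b) is then identical in both treatments.

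The genuine difference is in (b)$\Rightarrow$(a). After reducing to $\int G(\xi)\widehat{a_1}(\xi)\widehat{a_2}(-\xi)\,d\xi=0$ for all $a_1,a_2\in\mathcal O_N(\R^n)$, the paper invokes an abstract separation lemma (Lemma~\ref{lm-Sawa}): it first shows, via translation invariance of $\mathcal O_N$, that $\widehat{a_1}\cdot\bigl[\widehat{a_2}(-\,\cdot)G\bigr]\equiv0$ for every fixed $a_1$, and then constructs a single $a_1\in\mathcal O_N$ whose Fourier transform is nonvanishing on a punctured ball (Lemma~\ref{FuncZeta}) to force $G\equiv0$. Your route instead builds an approximate identity directly by taking $a_1=\Delta^{N+1}\phi_\delta$, $a_2=\overline{a_1}$, so that $\widehat{a_1}(\xi)\widehat{a_2}(-\xi)=|\widehat{a_1}(\xi)|^2$ concentrates at $\xi_0$. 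This is more elementary---it bypasses Lemmas~\ref{FuncZeta} and~\ref{lm-Sawa} entirely---and exploits the freedom to vary \emph{both} test functions simultaneously, which the paper's argument does not. The paper's lemma, on the other hand, isolates a reusable fact (testing against $\widehat{\mathcal O_N}$ detects a.e.\ vanishing of any $L^\infty$ function) that would transfer to situations where the diagonal pairing $a_2=\overline{a_1}$ is unavailable.
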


To obtain Theorem \ref{Thm-BiLin} we need a couple of lemmas.
Here and below by $B(x,r)$ the open ball centered at $x$ of radius $r>0$.

\begin{lemma}\label{lm-axis}
Assume that $\si$ is a bounded function on $\R^{n}\times \R^{n}$ and smooth away from the axes that satisfies \eqref{decay}.
Fix $N\in \mathbb{N}_0$. Then for all $\alpha\in\N^n_0 $ with
$|\alpha|\leq N$ there is a constant $C_\al$ such that
\begin{equation}\label{extra}
\sup_{0<\epsilon<1}
\sup_{\xi_1\in\R^n\setminus B(0,2\epsilon)}
\left|
\int_{\R^n}
g(\xi-\xi_1)
\sigma(\xi_1,\xi-\xi_1)
\partial^\alpha
[
\varphi_\epsilon
](\xi)
d\xi
\right|
\leq
C_{\alpha},
\end{equation}
where $g$ is a smooth function with bounded derivatives $\partial^\beta g$ and $\partial^\beta g(0)=0$ for all $|\beta|\le N $.
\end{lemma}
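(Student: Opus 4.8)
The plan is to integrate by parts in \eqref{extra}, transferring all of the $\partial^\alpha$ derivatives from $\varphi_\epsilon$ onto the product $g(\xi-\xi_1)\sigma(\xi_1,\xi-\xi_1)$, and then to show that the resulting function is bounded in $L^\infty$ by a constant that does \emph{not} depend on $\epsilon$ or $\xi_1$; since $\|\varphi_\epsilon\|_{L^1}$ stays bounded as $\epsilon\to0$, this yields the claim. Set $F(\xi):=g(\xi-\xi_1)\sigma(\xi_1,\xi-\xi_1)$, regarded as a function of $\xi$. The support of $\varphi_\epsilon$ lies in $B(0,\epsilon)$, and the hypothesis $\xi_1\notin B(0,2\epsilon)$ gives, for $\xi$ in that support, $|\xi-\xi_1|\ge|\xi_1|-\epsilon\ge|\xi_1|/2>0$; since moreover $\xi_1\neq0$, the point $(\xi_1,\xi-\xi_1)$ stays off the axes there, so $F$ is $\mathcal C^\infty$ on a neighbourhood of the compact set $\overline{B(0,\epsilon)}$, which does not meet $\{\xi_1\}$. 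Integrating by parts $|\alpha|$ times (no boundary term appears, $\varphi_\epsilon$ being supported in the region of smoothness of $F$) I would write
\begin{equation*}
\int_{\R^n}g(\xi-\xi_1)\sigma(\xi_1,\xi-\xi_1)\,\partial^\alpha[\varphi_\epsilon](\xi)\,d\xi=(-1)^{|\alpha|}\int_{\R^n}(\partial^\alpha_\xi F)(\xi)\,\varphi_\epsilon(\xi)\,d\xi,
\end{equation*}
so that the left side of \eqref{extra} is at most $\|\varphi_\epsilon\|_{L^1}\sup_\xi|\partial^\alpha_\xi F(\xi)|$. This reduces the lemma to the bound $\sup_\xi|\partial^\alpha_\xi F(\xi)|\le C_\alpha$ with $C_\alpha$ independent of $\epsilon$ and $\xi_1$.

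For that bound I would put $\eta:=\xi-\xi_1$ and expand, by the Leibniz rule,
\begin{equation*}
\partial^\alpha_\xi F(\xi)=\sum_{\beta\le\alpha}\binom{\alpha}{\beta}(\partial^\beta g)(\eta)\,(\partial^{\alpha-\beta}_{2}\sigma)(\xi_1,\eta).
\end{equation*}
Since $\sigma$ is smooth at $(\xi_1,\eta)$ (because $\xi_1\neq0$ and $\eta\neq0$), condition \eqref{decay} with the first multiindex equal to $0$ gives $|(\partial^{\alpha-\beta}_{2}\sigma)(\xi_1,\eta)|\le C_{\alpha-\beta}|\eta|^{-|\alpha-\beta|}$ when $\beta\neq\alpha$, and $|\sigma(\xi_1,\eta)|\le\|\sigma\|_{L^\infty}$ when $\beta=\alpha$. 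On the other hand, since $\partial^\beta g(0)=0$ for all $|\beta|\le N$ and here $|\beta|\le|\alpha|\le N$, Taylor's theorem gives $|(\partial^\beta g)(\eta)|\le C_\beta|\eta|^{N+1-|\beta|}$ for $|\eta|\le1$ (with $C_\beta$ controlled by the sup-norms of the derivatives of $g$ of order $N+1$). Splitting into $|\eta|\le1$ and $|\eta|>1$: in the first regime each summand is bounded by $C|\eta|^{N+1-|\beta|}|\eta|^{-|\alpha-\beta|}=C|\eta|^{N+1-|\alpha|}\le C$ since $N+1-|\alpha|\ge1$; in the second, using $|(\partial^\beta g)(\eta)|\le\|\partial^\beta g\|_{L^\infty}$ and $|\eta|^{-|\alpha-\beta|}\le1$ instead, each summand is again $\le C$. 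Hence $|\partial^\alpha_\xi F(\xi)|\le C_\alpha$ for all $\xi\in\R^n\setminus\{\xi_1\}$, with $C_\alpha$ depending only on $n$, $N$, $\alpha$, $\|\sigma\|_{L^\infty}$, the constants in \eqref{decay} for multiindices of size at most $|\alpha|$, and the sup-norms of the derivatives of $g$ up to order $N+1$ — in particular not on $\epsilon$ or $\xi_1$. Together with the previous step this proves \eqref{extra}.

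The only point that needs care is the interplay, on the support of $\varphi_\epsilon$, between the singular factor $|\xi-\xi_1|^{-|\alpha-\beta|}$ coming out of the $\sigma$-derivatives (which there is as large as $\sim\epsilon^{-|\alpha-\beta|}$) and the vanishing $|g(\xi-\xi_1)|\le C|\xi-\xi_1|^{N+1}$ near the origin: the hypothesis that $g$ vanishes at $0$ to order $N+1$, strictly larger than $|\alpha|$, is precisely what absorbs the singularity. Everything else — the choice of the two regimes $|\eta|\le1$, $|\eta|>1$, and the estimation of the Taylor remainder of $g$ by its top-order derivatives — is routine. I note finally that the bound on $\partial^\alpha_\xi F$ above holds on all of $\R^n\setminus\{\xi_1\}$ (the singularity at $\xi=\xi_1$ being removable for the same reason), so the precise form of $\varphi_\epsilon$ is immaterial: if $\varphi$ were only Schwartz rather than compactly supported, one would reach the same conclusion after a standard limiting argument justifying the integration by parts across $\{\xi=\xi_1\}$.
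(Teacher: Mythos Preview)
Your proof is correct and follows essentially the same route as the paper's: integrate by parts to move $\partial^\alpha$ from $\varphi_\epsilon$ onto $g(\xi-\xi_1)\sigma(\xi_1,\xi-\xi_1)$, expand by Leibniz, control the $\sigma$-derivatives via \eqref{decay}, and absorb the resulting $|\eta|^{-|\alpha-\beta|}$ by the vanishing of $\partial^\beta g$ at the origin. Your explicit split into $|\eta|\le1$ and $|\eta|>1$ and the Taylor estimate $|\partial^\beta g(\eta)|\le C_\beta|\eta|^{N+1-|\beta|}$ spell out what the paper compresses into the single line ``$\partial^\beta g$ are bounded and vanishing at $0$'', but the argument is the same.
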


\begin{proof}
Fix any
$\epsilon<1$
and any
$\xi_1\in\R^n\setminus B(0,2\epsilon)$.
We will show that
\begin{equation}\label{eq-bddInt}
\left|
\int_{\R^n}
g(\xi-\xi_1)
\sigma(\xi_1,\xi-\xi_1)
\partial^\alpha
[
\varphi_\epsilon
](\xi)
d\xi
\right|
\leq
C_{\alpha},
\end{equation}
where $C_{\alpha}$
is independent of $\epsilon$
and $\xi_1$.
Note that
the function
$\xi \mapsto \sigma(\xi_1,\xi-\xi_1)$
is smooth on
the domain of integration
$|\xi|< \epsilon $,
since $\xi_1\notin B(0,2\epsilon)$
and thus
$| \xi-\xi_1| \ge \epsilon$.
With this in mind, 
involving the Taylor expansion of $g$,
we notice that
\begin{align*}
&\left|
\int_{\R^n}
g(\xi-\xi_1)
\sigma(\xi_1,\xi-\xi_1)
\partial^\alpha
[
\varphi_\epsilon
](\xi)
d\xi
\right|\\
\leq&
C \sum_{\beta\leq\alpha}
\binom{\alpha}{\beta}
\left|
\int_{\R^n}
\partial^{\beta}g(\xi-\xi_1)
\partial_2^{\alpha-\beta}\sigma(\xi_1,\xi-\xi_1)
\varphi_\epsilon
(\xi)
d\xi
\right|\\
\leq&
C_{\alpha}  \|\varphi\|_{L^1}
\max_{ \be \le \al }
\;\sup_{\xi \in {\mathbb R}^n \setminus B(\xi_1,\epsilon)} \big|\partial^{\beta}g(\xi-\xi_1)
\partial_2^{\alpha-\beta}\sigma(\xi_1,\xi-\xi_1) \big|
\\
\leq&
C_{\alpha,\sigma}' \|\varphi\|_{L^1}
\big[\max_{ \be \le \al }\sup_{\xi \in \mathbb{R}^n\setminus \{\xi_1\}}|\partial^{\beta}g(\xi-\xi_1)|
 |\xi-\xi_1|^{|\be|-|\al|} \big] 
 =:C^{''}_{\alpha,\sigma,g,\varphi}<\infty,
\end{align*}
for any $\alpha\in\N^n_0$
with $|\alpha|\leq N$.
Here we used assumption (\ref{decay})
and the fact that
 $\partial^{\beta}g$ are bounded and vanishing at $0$ for all $|\beta|\le N $.
\end{proof}

\begin{lemma}\label{lem-NoLim}
Given $a_1,a_2\in \mathcal{O}_N(\mathbb{R}^n)$ and
$\sigma$ in $ L^\infty(\R^n\times\R^n)
\cap \mathcal{C}^\infty(\R^n\times\R^n\setminus \{(\xi,\eta): \,\, |\xi || \eta|=0\} )$
that satisfies $(\ref{decay})$, if $T_\si(a_1,a_2)$ has sufficient decay \eqref{EST-Tatoms}, then we have
 \begin{equation}\label{eq-NoLim}
 \int_{\R^n} (-2\pi ix)^\alpha T_\si(a_1,a_2)(x)dx
= \sum_{\beta\le \alpha} \binom{ \al }{ \be }
 \int_{\mathbb{R}^n}\widehat{a_1}(\xi_1)
 \partial^{\alpha-\beta} \widehat{a_2}(-\xi_1)\partial^{\beta}_2\sigma(\xi_1,-\xi_1)\; d\xi_1.
 \end{equation}
\end{lemma}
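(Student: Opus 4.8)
The plan is to pass to the Fourier side, rewrite the integral on the left of \eqref{eq-NoLim} as a point evaluation at the origin, regularise that evaluation by an approximate identity, and then take the limit with the help of Lemma~\ref{lm-axis}. Fix a cut-off $\vp\in{\mathscr C}^\infty_{\rm c}(\R^n)$ supported in the unit ball with $\int_{\R^n}\vp=1$, and set $\vp_\epsilon(\zeta)=\epsilon^{-n}\vp(\zeta/\epsilon)$, as in Lemma~\ref{lm-axis}. Since $\widehat{a_1},\widehat{a_2}$ are Schwartz and $\si\in L^\infty$, the change of variables $\zeta=\xi_1+\xi_2$ in \eqref{eq-TSigma} together with Fubini shows that $T_\si(a_1,a_2)$ is the inverse Fourier transform of
\[
G(\zeta)=\int_{\R^n}\si(\xi_1,\zeta-\xi_1)\,\widehat{a_1}(\xi_1)\,\widehat{a_2}(\zeta-\xi_1)\,d\xi_1,
\]
which lies in $L^1(\R^n)$ because $|G|\le\|\si\|_{L^\infty}\,|\widehat{a_1}|*|\widehat{a_2}|$; hence $\widehat{T_\si(a_1,a_2)}=G$. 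By the decay hypothesis \eqref{EST-Tatoms} the map $x\mapsto x^\al T_\si(a_1,a_2)(x)$ belongs to $L^1(\R^n)$, so its Fourier transform is continuous and agrees, up to a nonzero constant, with the distribution $\p^\al G$; thus $\p^\al G$ has a continuous representative and
\[
\int_{\R^n}(-2\pi i x)^\al T_\si(a_1,a_2)(x)\,dx=\p^\al G(0)=(-1)^{|\al|}\lim_{\epsilon\to0}\int_{\R^n}G(\zeta)\,\p^\al[\vp_\epsilon](\zeta)\,d\zeta,
\]
the last step being the recovery of the value at $0$ of a continuous function by testing against $\vp_\epsilon$, followed by one integration by parts.

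Next I would substitute the formula for $G$, interchange the $\xi_1$- and $\zeta$-integrations (legitimate since the integrand is dominated by $\|\si\|_{L^\infty}|\widehat{a_1}(\xi_1)|\,|\widehat{a_2}(\zeta-\xi_1)|\,|\p^\al[\vp_\epsilon](\zeta)|\in L^1$), and split the $\xi_1$-integral at $|\xi_1|=2\epsilon$, writing $I_\epsilon(\xi_1):=\int_{\R^n}\si(\xi_1,\zeta-\xi_1)\widehat{a_2}(\zeta-\xi_1)\,\p^\al[\vp_\epsilon](\zeta)\,d\zeta$. On the region $|\xi_1|\le 2\epsilon$ the support of $\p^\al[\vp_\epsilon]$ forces $|\zeta-\xi_1|\le 3\epsilon$, so Taylor's theorem and the vanishing moments of $a_2$ give $|\widehat{a_2}(\zeta-\xi_1)|\lesssim\epsilon^{N+1}$, while $\|\p^\al[\vp_\epsilon]\|_{L^1}=\epsilon^{-|\al|}\|\p^\al\vp\|_{L^1}$; since $|\al|\le N$ this piece is $O(\epsilon)\|\widehat{a_1}\|_{L^1}\to0$. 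On the region $|\xi_1|>2\epsilon$, Lemma~\ref{lm-axis} applied with $g=\widehat{a_2}$ — whose derivatives of order $\le N$ vanish at the origin exactly because $a_2\in\mathcal{O}_N(\R^n)$ — gives $|I_\epsilon(\xi_1)|\le C_\al$ uniformly in $\epsilon\in(0,1)$ and in $\xi_1\notin B(0,2\epsilon)$, so $|\widehat{a_1}(\xi_1)|\,C_\al$ is an integrable majorant and the dominated convergence theorem applies in $\xi_1$.

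It then remains to evaluate $\lim_{\epsilon\to0}I_\epsilon(\xi_1)$ for fixed $\xi_1\ne0$. For such $\xi_1$ and small $\epsilon$, on the support of $\p^\al[\vp_\epsilon]$ one has $|\zeta-\xi_1|\ge\epsilon>0$, so $(\xi_1,\zeta-\xi_1)$ avoids the axes and $\zeta\mapsto\si(\xi_1,\zeta-\xi_1)\widehat{a_2}(\zeta-\xi_1)$ is smooth near $\zeta=0$; integrating by parts, applying the Leibniz rule, and using $\vp_\epsilon\to\delta_0$ yields
\[
\lim_{\epsilon\to0}I_\epsilon(\xi_1)=(-1)^{|\al|}\sum_{\be\le\al}\binom{\al}{\be}\,\p^{\al-\be}\widehat{a_2}(-\xi_1)\,\p^{\be}_{2}\si(\xi_1,-\xi_1).
\]
Feeding this back, cancelling the two factors $(-1)^{|\al|}$, and interchanging the finite sum with the integral produces \eqref{eq-NoLim}. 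The main obstacle — and precisely what Lemma~\ref{lm-axis} is designed to resolve — is the interchange of $\lim_{\epsilon\to0}$ with the $\xi_1$-integration near the singular set $\{\xi_1=0\}$, where $\si$ is only bounded; apart from that, the only point needing care is the distributional reading of $\p^\al G(0)$, which is exactly why the decay estimate \eqref{EST-Tatoms} is imposed.
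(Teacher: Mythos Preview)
Your proof is correct and follows essentially the same approach as the paper: pass to the Fourier side via an approximate identity, split the $\xi_1$-integral at radius $2\epsilon$, invoke Lemma~\ref{lm-axis} and dominated convergence on the outer piece, and kill the inner piece with vanishing moments. The only cosmetic difference is that on the region $|\xi_1|\le 2\epsilon$ you exploit the moments of $a_2$ (via $|\zeta-\xi_1|\le 3\epsilon$) whereas the paper uses those of $a_1$; both choices work equally well here since $a_1,a_2\in\mathcal{O}_N(\R^n)$.
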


\begin{proof}
First, we write
\begin{align}\label{xTa-LimT}
\int_{\R^n}
(-2\pi i x)^\al
T_\si(a_1,a_2)(x)
dx
& =
\partial^\alpha
 \left[
\widehat{T_\si(a_1,a_2)}
 \right] (0) \notag \\
& =
\lim_{\epsilon\to0}(-1)^{|\alpha|}
\int_{\R^n}
\widehat{T_\si(a_1,a_2)}(\xi)
\partial^\alpha
[
\varphi_\epsilon
](\xi)
d\xi
\end{align}
using integration by parts.
In view of the identity
\begin{equation}\label{formula}
\widehat{T_{\sigma}(a_1,a_2)}(\xi)
=
\int_{\R^n}
\widehat{a_1}(\xi_1)
\widehat{a_2}(\xi-\xi_1)
\sigma(\xi_1,\xi-\xi_1)
d\xi_1,
\end{equation}
the expression on the right in
(\ref{xTa-LimT}) equals
\begin{equation}\label{LimEps}
\lim_{\epsilon\to0}
(-1)^{|\alpha|}
\int_{\R^n}
\widehat{a_1}(\xi_1)
\left(
\int_{\R^n}
\widehat{a_2}(\xi-\xi_1)\sigma(\xi_1,\xi-\xi_1)
\partial^\alpha
[
\varphi_\epsilon
](\xi)
d\xi
\right)
d\xi_1.
\end{equation}
Now, we decompose
(\ref{LimEps}) as
$
\lim_{\epsilon\to0}
({\rm I}_\epsilon
+
{\rm II}_\epsilon)
$,
where
\begin{align*}
&{\rm I}_{\epsilon}
:=
(-1)^{|\alpha|}
\int_{B(0,2\epsilon)}
\widehat{a_1}(\xi_1)
\left(
\int_{\R^n}
\widehat{a_2}(\xi-\xi_1)\sigma(\xi_1,\xi-\xi_1)
\partial^\alpha
[
\varphi_\epsilon
](\xi)
d\xi
\right)
d\xi_1,\\
&{\rm II}_{\epsilon}
:=
(-1)^{|\alpha|}
\int_{\R^n\setminus B(0,2\epsilon)}
\widehat{a_1}(\xi_1)
\left(
\int_{\R^n}
\widehat{a_2}(\xi-\xi_1)
\sigma(\xi_1,\xi-\xi_1)
\partial^\alpha
[\varphi_\epsilon](\xi)
d\xi
\right)
d\xi_1.
\end{align*}
For the first term,
 using the vanishing moment condition
for $a_1$,
we have that
\begin{equation*}
|
{\rm I}_{\epsilon}
|
\leq
C
\|\widehat{a_2}\|_{L^\infty}
\|\sigma\|_{L^\infty}
\|\partial^\alpha\varphi\|_{L^1}
\int_{B(0,2\epsilon)}
|\xi_1|^{N}
\epsilon^{-|\alpha|}
d\xi_1
\leq
C\epsilon^{N-|\al|+n}
\to0
\quad
(\epsilon\to0).
\end{equation*}
For the second term,
inequality \eqref{extra}
gives us
\begin{equation}\label{160226-6}
\left|
\int_{\R^n}
\widehat{a_2}(\xi-\xi_1)
\sigma(\xi_1,\xi-\xi_1)
\partial^\alpha
[\varphi_\epsilon](\xi)
d\xi
\right|
\leq
C_{\alpha},
\end{equation}
for any $\epsilon\in(0,1)$
and any
$\xi_1\in\R^n\setminus B(0,2\epsilon)$
where the constant $C_\alpha$
is independent of $\epsilon$
and $\xi_1$. Recall $\partial_2$ the derivative with respect to the second variable of a function of two variables. 
Integrating by parts, we rewrite $\mathrm{II}_{\epsilon}$ as
\[
\mathrm{II}_{\epsilon} 
= 
(-1)^{|\alpha|}
\int_{\R^n\setminus B(0,2\epsilon)}
\widehat{a_1}(\xi_1)
\left(
\sum_{\beta\leq\alpha} \binom{\al}{\be}
\int_{\R^n}
\partial^{\alpha-\beta}{\widehat{a_2}}(\xi-\xi_1)
\partial_2^{\beta}\sigma(\xi_1,\xi-\xi_1)
\varphi_\epsilon
(\xi)
d\xi
\right)
d\xi_1.
\]
The Lebesgue dominated convergence theorem
and
the approximation to identity, combined with the fact
that \eqref{160226-6} holds and that $\widehat{a_1}\in L^1(\mathbb R^n)$, yields
\begin{equation*}
\lim_{\epsilon\to0}
{\rm II}_\epsilon\\
=
\sum_{\beta\le \alpha}\binom{\al}{\be}
 \int_{\mathbb{R}^n}\widehat{a_1}(\xi_1)
 \partial^{\alpha-\beta} \widehat{a_2}(-\xi_1)\partial^{\beta}_2\sigma(\xi_1,-\xi_1)\; d\xi_1.
\end{equation*}
This completes the proof of the lemma.
\end{proof}

\begin{lemma}\label{FuncZeta}
 There exists a function
$\zeta\in \mathcal C^{\infty}_0(\mathbb{R}^n)$
such that
\begin{equation}\label{eq:160331-1}
\{\xi \in B(0,1)\,:\,\widehat{\zeta}(\xi)= 0\}=\{0\}.
\end{equation}
\end{lemma}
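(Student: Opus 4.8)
The plan is to construct $\zeta$ so that $\widehat{\zeta}$ is, near the origin, a bump function built as a product of a Gaussian-type factor and a nonvanishing radial profile, and then verify that such $\widehat{\zeta}$ is itself the Fourier transform of a Schwartz, compactly supported function by working on the kernel side. Concretely, first I would fix a smooth radial function $\psi \in \mathcal{C}^\infty_0(\mathbb{R}^n)$ with $\psi \equiv 1$ on $B(0,1/2)$ and $\mathrm{supp}\,\psi \subset B(0,1)$, and consider a candidate of the form $\widehat{\zeta}(\xi) = \psi(\xi) e^{-|\xi|^2} + (1-\psi(\xi)) h(\xi)$, where $h$ is chosen smooth and compactly supported with $h > 0$ on $\mathrm{supp}(1-\psi) \cap B(0,1)$; on $B(0,1)$ this function is a convex-type combination of strictly positive quantities except that we must be careful at $\xi = 0$, where $e^{-|0|^2} = 1 \ne 0$, so in fact $\widehat{\zeta}(0) = 1 \ne 0$ and the required zero set $\{0\}$ is \emph{empty}, not $\{0\}$.

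So I would instead aim for the genuinely intended statement: $\widehat{\zeta}$ should vanish \emph{only} at the origin inside $B(0,1)$, which forces $\widehat{\zeta}(0) = 0$. The cleanest route is to take $\zeta = \Delta \eta$ (or a single second derivative $\partial_j^2 \eta$) for a suitable $\eta \in \mathcal{C}^\infty_0(\mathbb{R}^n)$, so that $\widehat{\zeta}(\xi) = -4\pi^2 |\xi|^2 \widehat{\eta}(\xi)$, which vanishes at $0$ with the rest of the zeros controlled by $\widehat{\eta}$. Thus it suffices to produce $\eta \in \mathcal{C}^\infty_0(\mathbb{R}^n)$ with $\widehat{\eta}$ nonvanishing on the closed ball $\overline{B(0,1)}$; then $\widehat{\zeta}(\xi) = -4\pi^2|\xi|^2\widehat{\eta}(\xi) = 0$ on $B(0,1)$ exactly when $\xi = 0$. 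To get such an $\eta$, take $\eta_0 \in \mathcal{C}^\infty_0(B(0,1/4))$ real, even, and nonnegative with $\eta_0 \not\equiv 0$, and set $\eta = \eta_0 * \eta_0$; then $\widehat{\eta} = (\widehat{\eta_0})^2 \ge 0$, and $\eta \in \mathcal{C}^\infty_0(B(0,1/2))$. This gives nonnegativity but not strict positivity. To force strict positivity on $\overline{B(0,1)}$, instead take $\eta = g_\delta$ to be a smooth compactly supported approximation to a Gaussian: choose $\eta$ so that $\widehat{\eta}(\xi) = e^{-|\xi|^2} + r(\xi)$ where $r$ is a smooth, rapidly decaying perturbation with $\|r\|_{L^\infty(\overline{B(0,1)})} < e^{-1}$ arising from truncating the Gaussian's (analytic, hence non-compactly-supported) preimage to a large ball and mollifying; since $e^{-|\xi|^2} \ge e^{-1}$ on $\overline{B(0,1)}$, we get $\widehat{\eta} > 0$ there. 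Then $\zeta := \Delta \eta \in \mathcal{C}^\infty_0(\mathbb{R}^n)$ does the job.

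The key steps, in order: (1) reduce to finding $\eta \in \mathcal{C}^\infty_0$ with $\widehat{\eta}$ nonvanishing on $\overline{B(0,1)}$, via $\zeta = \Delta\eta$ and the identity $\widehat{\Delta\eta}(\xi) = -4\pi^2|\xi|^2\widehat\eta(\xi)$; (2) construct $\eta$ by truncating and mollifying a Gaussian so that $\widehat\eta$ is a small $L^\infty(\overline{B(0,1)})$-perturbation of $e^{-|\xi|^2}$, using that the Fourier transform depends continuously (in sup norm on compacts) on $L^1$ perturbations of the kernel; (3) conclude $\widehat\eta \geq e^{-1} - \|r\|_\infty > 0$ on $\overline{B(0,1)}$; (4) assemble $\zeta = \Delta\eta$ and read off $\{\xi \in B(0,1) : \widehat\zeta(\xi) = 0\} = \{0\}$.

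The main obstacle is step (2): the honest Gaussian is not compactly supported, so one cannot simply "take" $\widehat\zeta$ to be Gaussian-times-polynomial; one must truncate the Gaussian kernel $G(x) = c\, e^{-\pi|x|^2}$ outside a large ball $B(0,R)$ and mollify to restore smoothness, and then control the resulting error $r = \widehat{\eta} - \widehat{G}$ uniformly on $\overline{B(0,1)}$ by $\|G - \eta\|_{L^1}$, which can be made as small as desired by taking $R$ large and the mollification fine. This is routine but is the one place where a genuine estimate (rather than a formal manipulation) is needed; everything else is bookkeeping. An alternative that sidesteps even this: since we only need $\widehat\eta \ne 0$ on a \emph{compact} set, one may instead take any real even $\eta_0 \in \mathcal{C}^\infty_0$ with $\widehat{\eta_0} > 0$ everywhere — e.g.\ $\eta_0 = \chi * \chi$ for $\chi$ the normalized indicator of a small cube, mollified — noting that such $\widehat{\eta_0} = |\widehat\chi|^2$-type objects can be arranged to have no zeros in $\overline{B(0,1)}$ by scaling $\chi$ to have very small support so that $\widehat{\eta_0}$ is close to $1$ on $\overline{B(0,1)}$; then again $\zeta = \Delta\eta_0$.
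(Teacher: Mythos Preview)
Your proposal is correct, and the cleanest version of it is actually simpler than the paper's own argument. The paper builds $\zeta$ by starting from the explicit function $\big(\tfrac{\cos|\xi|-1}{|\xi|}\big)^{n+1}$, invoking an external result (\cite{BGSY}) that its inverse Fourier transform is compactly supported, then convolving with a smooth bump $\Phi$ to restore smoothness, and finally dilating. Your route is different: you force the zero at the origin via the Laplacian factor, writing $\widehat{\zeta}(\xi)=-4\pi^2|\xi|^2\widehat{\eta}(\xi)$, and you only need $\widehat{\eta}\ne 0$ on $\overline{B(0,1)}$, which follows immediately from continuity once $\int\eta\ne 0$ and the support of $\eta$ is scaled small enough. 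The advantage of your approach is that it is entirely self-contained and avoids the cited lemma on compactly supported Fourier transforms; the paper's approach, by contrast, produces an explicit $\widehat{\zeta}$ at the cost of that external dependence. One remark on presentation: your Gaussian-truncation argument for producing $\eta$ works, but it is unnecessary---the scaling alternative you give at the end (take any nonnegative $\eta\in\mathcal{C}^\infty_0$ with $\int\eta=1$ and shrink its support so that $|\widehat{\eta}(\xi)-1|<1$ on $\overline{B(0,1)}$) is both shorter and avoids any estimate beyond $|e^{-2\pi i x\cdot\xi}-1|\le 2\pi|x||\xi|$.
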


\begin{proof}
The Fourier transform of the function
$\big( \f{\cos |\xi| - 1}{|\xi|}\big)^{n+1}$ on $\mathbb{R}^n$ is
known to be compactly supported;
see \cite[Lemma 3.1]{BGSY} and bounded but may not be smooth.
Let $\Phi$ be a smooth and compactly supported function with non-vanishing integral. Then
$\zeta = \Phi * \Big(\big( \f{\cos |\xi| - 1}{|\xi|}\big)^{n+1}\Big)\spcheck $ lies in $\mathcal C^{\infty}_0(\mathbb{R}^n)$ and satisfies
$\widehat{\zeta}(\xi)\ne 0$ for all $0\ne\xi$ in a neighborhood of the origin, since $\widehat{\Phi}$ and $\cos |\xi| - 1$ do not vanish near zero and $\cos |\xi| - 1$ vanishes only at zero.
It remains to dilate $\zeta$ to make it satisfy
(\ref{eq:160331-1}).
\end{proof}

\begin{lemma} \label{lm-Sawa} 
Let $N\in\N$ be fixed and $F \in L^\infty({\mathbb R}^n)$.
Assume for all functions $G \in L^\infty_{\rm c}({\mathbb R}^n)$
with $\widehat{G} \in L^1({\mathbb R}^n)$ satisfying
\[
\int_{{\mathbb R}^n}x^\alpha G(x)\,dx=0 \qquad \forall \,\, |\al| \le N,
\]
we have
\[
\int_{{\mathbb R}^n}\widehat{G}(\xi)F(\xi)\,d\xi=0 ,
\]
Then
$F=0$ a.e..
\end{lemma}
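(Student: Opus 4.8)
The plan is to show that the orthogonality hypothesis forces $F$ to be a polynomial of degree at most $N$ in the distributional sense, and then to use $F\in L^\infty(\mathbb R^n)$ to conclude $F=0$. First I would reformulate the hypothesis: for every $G$ in the stated class, $\int \widehat G(\xi)F(\xi)\,d\xi=0$. By Fubini (justified since $\widehat G\in L^1$ and $F\in L^\infty$, or rather by Parseval for the pairing of a function with a tempered distribution), this says $\langle F, \widehat G\rangle = 0$. Now the key observation is that the set of Fourier transforms $\{\widehat G : G\in L^\infty_{\rm c},\ \widehat G\in L^1,\ \int x^\alpha G\,dx=0\ \forall |\alpha|\le N\}$ is rich enough that its orthogonal complement (against bounded functions) consists only of polynomials of degree $\le N$. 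Concretely, given any Schwartz function $\psi$, and any multiindex decomposition, one builds $G$ by multiplying a fixed bump by monomials so as to kill moments up to order $N$: the condition $\int x^\alpha G\,dx=0$ for $|\alpha|\le N$ is exactly the condition $\partial^\alpha \widehat G(0)=0$ for $|\alpha|\le N$, i.e. $\widehat G$ vanishes to order $N+1$ at the origin.

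So I would argue as follows. Let $\mathcal V = \{h\in \mathscr S(\mathbb R^n) : \partial^\alpha h(0)=0\text{ for all }|\alpha|\le N\}$. Every $h\in\mathcal V$ is the Fourier transform of some $G=\widehat{h}(-\cdot)$, which is Schwartz, hence in $L^\infty_{\rm c}$... no—$G$ need not be compactly supported. This is the main obstacle: the hypothesis only gives test functions $G$ that are compactly supported, so I cannot directly use all of $\mathcal V$. To get around this, I would instead use the function $\zeta$ from Lemma~\ref{FuncZeta}: for any point $\xi_0\ne 0$ and any direction, take $G(x)=\zeta(x)e^{2\pi i x\cdot\xi_0}\cdot(\text{polynomial correction})$, where the polynomial of degree $\le N$ in $x$ is chosen to enforce the $N+1$ moment conditions; then $\widehat G$ is a linear combination of derivatives of $\widehat\zeta(\cdot-\xi_0)$ up to order $N$, and since $\widehat\zeta$ is smooth, compactly supported, and has an isolated zero only at $0$, one can localize near $\xi_0$. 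By varying $\xi_0$ and the correction, translating, and using a partition of unity, one shows $\int_K \widehat G\, F=0$ for a family of $\widehat G$ that spans (in a suitable weak sense) enough of $L^1$ supported near any $\xi_0\ne 0$ to force $F=0$ a.e. on $\mathbb R^n\setminus\{0\}$, hence $F=0$ a.e.

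More carefully, here is the cleanest route I would take. Fix a ball $B=B(\xi_0,r)$ with $0\notin \overline B$. Pick $\eta\in C^\infty_{\rm c}(B)$ arbitrary; I want to realize $\eta$ (up to the polynomial obstruction) as $\widehat G$ for admissible $G$. Since $0\notin\overline B$, the function $\eta$ automatically satisfies $\partial^\alpha\eta(0)=0$ for all $\alpha$, so $G:=\eta\spcheck$ is Schwartz with all moments of order $\le N$ vanishing; the only failure is that $G$ is not compactly supported. To fix this, approximate: let $\eta_\delta=\eta$ but replace $G$ by $G\cdot\chi$ for a smooth cutoff $\chi$ equal to $1$ on a large ball, then correct the finitely many moments $\int x^\alpha (G\chi)\,dx$, $|\alpha|\le N$, by subtracting a fixed smooth compactly supported function times small coefficients $c_\alpha=c_\alpha(\chi)\to 0$; the resulting $G_\chi$ is in $L^\infty_{\rm c}$, has vanishing moments up to order $N$, and $\widehat{G_\chi}\to\eta$ in $L^1$ (using that $G\chi\to G$ in $\mathscr S$-seminorms controlling $L^1$ of the Fourier transform, and $c_\alpha\to 0$). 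Passing to the limit in $\int \widehat{G_\chi}F=0$ gives $\int_B \eta\,F=0$ for all $\eta\in C^\infty_{\rm c}(B)$, hence $F=0$ a.e. on $B$. As $B$ ranges over all balls avoiding the origin, $F=0$ a.e. on $\mathbb R^n$. The main obstacle, as indicated, is the compact-support requirement on $G$, handled by the cutoff-plus-moment-correction approximation; the role of Lemma~\ref{FuncZeta} is actually not needed in this streamlined version, since avoiding the origin makes all moment conditions at $0$ vacuous — I would double-check whether the paper's intended proof instead genuinely needs $\zeta$ near the origin, in which case one would run the same correction argument with $G=\zeta\cdot(\text{trigonometric}\times\text{polynomial})$ to handle a neighborhood of $0$ as well.
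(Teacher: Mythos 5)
Your proposal is correct, but it follows a genuinely different route from the paper. The paper first exploits translation invariance of the admissible class: since every translate $G(\cdot-x_0)$ of an admissible $G$ is again admissible, the hypothesis gives $\int \widehat{G}(\xi)F(\xi)e^{2\pi i x_0\cdot\xi}\,d\xi=0$ for all $x_0$, i.e.\ $(\widehat{G}F)\spcheck\equiv 0$, hence $\widehat{G}F=0$ a.e.\ for \emph{each} admissible $G$; it then produces one concrete good test function, namely $G=\zeta*\cdots*\zeta$ ($N+1$ times) with $\zeta$ from Lemma~\ref{FuncZeta}, so that $\widehat{G}=\widehat{\zeta}^{\,N+1}$ vanishes only at the origin on $B(0,1)$, giving $F=0$ a.e.\ on $B(0,1)$, and finishes by dilation. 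You instead prove an $L^1$-density statement: for any $\eta\in C^\infty_{\rm c}$ supported away from the origin, the function $G=\eta\spcheck$ is Schwartz with all moments up to order $N$ vanishing, and after a smooth cutoff plus a finite-dimensional moment correction (subtracting small multiples of a fixed dual family $\psi_\beta\in C^\infty_{\rm c}$ with $\int x^\alpha\psi_\beta\,dx=\delta_{\alpha\beta}$) you obtain admissible $G_R$ with $\widehat{G_R}\to\eta$ in $L^1$, so $\int\eta F=0$ and $F=0$ a.e.\ off the origin. Your limiting step is sound ($G\chi_R\to G$ in the Schwartz topology, hence $\widehat{G\chi_R}\to\eta$ in $L^1$, and the correction coefficients tend to $0$), and as you note it makes Lemma~\ref{FuncZeta} and the translation trick unnecessary; what the paper's argument buys in exchange is the avoidance of any approximation/density machinery, using a single explicit test function and the elementary observation that the class is translation invariant. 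One small presentational point: state the moment correction with a finite dual family indexed by $|\beta|\le N$ (its existence follows from positive definiteness of the Gram matrix $\int x^\alpha x^\beta\phi\,dx$ for a nonnegative bump $\phi$), rather than ``a fixed smooth compactly supported function,'' since a single corrector cannot adjust all moments independently.
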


\begin{proof}
Denote
\[
\Omega_N(\mathbb{R}^n) = \big\{f\in L^{\infty}_{\rm c}(\mathbb{R}^n)\ :\ \widehat{f}\in L^1(\mathbb{R}^n),\int_{\R^n} x^{\alpha}f(x)\; dx=0,\quad\forall |\alpha|\le N\big\}.
\]
First, we observe that if $G\in\Omega_N(\mathbb{R}^n)$, then $G_{x_0}\in\Omega_N(\mathbb{R}^n),$ where $G_{x_0} = G(\cdot-x_0)$ 
for given $x_0\in \mathbb{R}^n$. To check this observation for $G\in\Omega_N(\mathbb{R}^n)$, we can easily see that $G_{x_0}$ is a bounded function with bounded support. Also $\widehat{G_{x_0}}(\xi) = e^{2\pi ix_0\cdot\xi}\widehat{G}(\xi)$; 
and hence $\widehat{G_{x_0}}\in L^1({\mathbb R}^n),$ since $\widehat{G}\in L^1({\mathbb R}^n).$
Next we want to show that
\begin{equation}\label{Gx0Can}
 \int_{\R^n} x^{\alpha}G_{x_0}(x)\; dx =0,\qquad\forall|\alpha|\le N.
\end{equation}
In fact, we have
\begin{align*}
 \int_{\R^n} x^{\alpha}G_{x_0}(x)\; dx
&= \int_{\R^n} (x+x_0)^{\alpha}G(x)\; dx \\
 &= \sum_{\beta\le\alpha}C_{\alpha,\beta}(x_0)
\int_{\R^n} x^{\beta}G(x)\; dx=0,\quad\forall |\alpha|\le N.
\end{align*}
Thus \eqref{Gx0Can} is verified, and we are done with checking that $G_{x_0}\in \Omega_N(\mathbb{R}^n)$.

As a consequence of the above observation, we claim that $\widehat{G}F=0$ a.e. and for all $G\in\Omega_N(\mathbb{R}^n)$. Indeed, fix $G\in\Omega_N(\mathbb{R}^n).$ For each $x_0\in\mathbb{R}^n,$ the above observation showed that $G_{x_0} = G(\cdot-x_0)\in \Omega_N(\mathbb{R}^n).$ Therefore,
\[
\int_{\R^n} \widehat{G}(\xi)F(\xi)e^{2\pi ix_0\cdot\xi}\;d\xi
=
\int_{\R^n} \widehat{G_{x_0}}(\xi)F(\xi)\;d\xi=0,
\]
i.e., $\big(\widehat{G}F)\spcheck(x_0) = 0$ for each $x_0\in \mathbb{R}^n,$ and for all $G\in\Omega_N(\mathbb{R}^n)$. This completes our claim $\widehat{G}F=0$ a.e. and for all $G\in\Omega_N(\mathbb{R}^n)$.

The rest of the proof is to verify that $F=0$ a.e.
by showing $F=0$ a.e. on $B(0,1)$.
By Lemma \ref{FuncZeta},
we can find a function $\zeta\in C^{\infty}_0(\mathbb{R}^n)$
such that $\widehat{\zeta}(0)=0$ and $\widehat{\zeta}(\xi)\ne 0$
for all $0<|\xi|<1.$ Define
\[
G=  \underbrace{\zeta * \cdots * \zeta}_{N+1 \,\, \textup{times}}.
\]
It is clear that $G  \in C^{\infty}_0(\mathbb{R}^n)$ and
\[
\widehat{G}(\xi) = \big[\zeta(\xi)\big]^{N+1},
\]
which satisfies condition $\partial^{\alpha}\widehat{G}(0)=0$ for all $|\alpha|\le N.$
Thus $G \in\Omega_N(\mathbb{R}^n)$. By our claim, we have $\widehat{G}F=0$ a.e. Noting that $\widehat{G}(\xi)\ne 0$
for $0<|\xi|<1,$
we deduce $F=0$ a.e. on $B(0,1)$.
By a suitable dilation, we can show that $F=0$ a.e. on $\mathbb{R}^n$.
\end{proof}

\begin{proof}[Proof of Theorem \ref{Thm-BiLin}]
We first assume \eqref{eq-CanDrvSigma}, and then prove \eqref{eq-CanclT}.
This direction can be obtained easily by Lemma \ref{lem-NoLim}.

Next we consider the inverse implication, i.e.,
assume \eqref{eq-CanclT} and then prove \eqref{eq-CanDrvSigma}.
We first focus on the case of
$\alpha=0$.
By Lemma \ref{lem-NoLim}, condition \eqref{eq-CanclT} is equivalent to
\[
\int_{\R^n} \widehat{a_1}(\xi_1)\widehat{a_2}(-\xi_1)
\sigma(\xi_1,-\xi_1)\; d\xi_1=0
\]
for all $H^{p_1}$-atoms $a_1$ and for all $H^{p_2}$-atoms $a_2$.
Now Lemma \ref{lm-Sawa} implies that
\begin{equation}\label{AtomSgmVanish}
\widehat{a_2}(-\xi_1)\sigma(\xi_1,-\xi_1)=0,\quad\forall \xi_1\ne 0.
\end{equation}
Fix $\xi_1\in\R^n,\xi_1\ne 0.$
Choose $a_2\in C^{\infty}_0(\mathbb{R}^n)$,
such that
$\widehat{a_2}(-\xi_1)>0$, and hence \eqref{AtomSgmVanish} deduces
$\sigma(\xi_1,-\xi_1)=0$,
which implies \eqref{eq-CanDrvSigma} for $\alpha=0$.

Next, we discuss the case of
$|\alpha|\ge 1$ by induction on its order.
Indeed, assume inductively
that \eqref{eq-CanDrvSigma} holds for all $|\alpha|\le k<N.$
We want to show that it also holds for $|\alpha|= k+1\le N$.
The inductive hypothesis together with Lemma \ref{lem-NoLim} deduces
\[
\int_{\R^n} \widehat{a_1}(\xi_1)\widehat{a_2}(-\xi_1)
\partial^{\alpha}_2\sigma(\xi_1,-\xi_1)\; d\xi_1=0.
\]
Repeat the argument in the case $\alpha=0$, we obtain \eqref{eq-CanDrvSigma} for $|\alpha|=k+1.$ The proof of the theorem is now completed.
\end{proof}

\section{The multilinear case}

In this section we prove Theorem~\ref{Thm-MulLin}.

\begin{lemma}\label{lem-NoLimMul}
Let $N \in {\mathbb N}$.
Let $\alpha$ be a multi-index with $|\alpha| \le N$.
Let $\sigma$ and $a_i$ be functions
as stated in Theorem \ref{Thm-MulLin}.
Then we have
 \begin{multline}
 \label{eq-NoLimMul}
 \int_{\R^n} (-2\pi ix)^\alpha T_{\sigma}(a_1,\ldots,a_m)(x)\; dx\\ = \sum_{\beta\le \alpha}
 \binom{\alpha}{\beta}
 \int_{\mathbb{R}^{(m-1)n}}\widehat{a_1}(\xi_1)\cdots \widehat{a_{m-1}}(\xi_{m-1})
 \partial^{\alpha-\beta} \widehat{a_m}(-\xi_1-\cdots-\xi_{m-1})\times\\
 \times\partial^{\beta}_m\sigma(\xi_1,\ldots,\xi_{m-1},-\xi_1-\cdots-\xi_{m-1})\;
 d\xi_1\cdots d\xi_{m-1}.
 \end{multline}
\end{lemma}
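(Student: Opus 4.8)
The plan is to mimic the proof of Lemma~\ref{lem-NoLim} from the bilinear case, iterating the single-variable integration-by-parts argument across the $m-1$ free frequency variables. First I would write, exactly as in \eqref{xTa-LimT},
\[
\int_{\R^n} (-2\pi ix)^\alpha T_\sigma(a_1,\ldots,a_m)(x)\, dx
= \partial^\alpha\big[\widehat{T_\sigma(a_1,\ldots,a_m)}\big](0)
= \lim_{\epsilon\to 0}(-1)^{|\alpha|}\int_{\R^n}\widehat{T_\sigma(a_1,\ldots,a_m)}(\xi)\,\partial^\alpha[\varphi_\epsilon](\xi)\, d\xi,
\]
using that $T_\sigma(a_1,\ldots,a_m)$ has the decay \eqref{EST-Tatoms}, so the moment integral converges absolutely and Fourier inversion / integration by parts is justified. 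Then I would insert the Fourier-side formula
\[
\widehat{T_\sigma(a_1,\ldots,a_m)}(\xi) = \int_{\mathbb{R}^{(m-1)n}}\widehat{a_1}(\xi_1)\cdots\widehat{a_{m-1}}(\xi_{m-1})\,\widehat{a_m}(\xi-\xi_1-\cdots-\xi_{m-1})\,\sigma(\xi_1,\ldots,\xi_{m-1},\xi-\xi_1-\cdots-\xi_{m-1})\, d\xi_1\cdots d\xi_{m-1},
\]
the $m$-linear analogue of \eqref{formula}, and interchange the $\xi$-integral with the $(\xi_1,\ldots,\xi_{m-1})$-integral (Fubini, justified by \eqref{decay} and the integrability of the $\widehat{a_i}$), so that the $\partial^\alpha[\varphi_\epsilon]$ sits against the single variable $\xi$ on which only $\widehat{a_m}$ and the last slot of $\sigma$ depend.

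The core of the argument is then to analyze, for fixed $(\xi_1,\ldots,\xi_{m-1})$, the inner integral
\[
\int_{\R^n}\widehat{a_m}(\xi-\xi_1-\cdots-\xi_{m-1})\,\sigma(\xi_1,\ldots,\xi_{m-1},\xi-\xi_1-\cdots-\xi_{m-1})\,\partial^\alpha[\varphi_\epsilon](\xi)\, d\xi.
\]
Exactly as in Lemma~\ref{lem-NoLim}, I would split the outer region $\{\xi_1 + \cdots + \xi_{m-1} \in B(0,2\epsilon)\}$ from its complement. On the small region I use the vanishing moments of $\widehat{a_1}$ (or, more carefully, a bound of the form $|\widehat{a_1}(\xi_1)\cdots\widehat{a_{m-1}}(\xi_{m-1})|$ integrated over the slab where $\xi_1+\cdots+\xi_{m-1}$ is small, combined with $\|\sigma\|_\infty$ and $\|\partial^\alpha\varphi\|_{L^1}$ and the $\epsilon^{-|\alpha|}$ from scaling) to get a bound that is $O(\epsilon^{N-|\alpha|+n})\to 0$; here one needs $N\ge |\alpha|$, which holds since $|\alpha|\le N$. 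On the complementary region one has $|\xi - \xi_1 - \cdots - \xi_{m-1}|\ge\epsilon$ on the support of $\varphi_\epsilon$, so $\xi\mapsto\sigma(\xi_1,\ldots,\xi_{m-1},\xi-\xi_1-\cdots-\xi_{m-1})$ is smooth there; one integrates $\partial^\alpha[\varphi_\epsilon]$ by parts, distributing $\alpha$ via the Leibniz rule over $\widehat{a_m}$ and over $\partial^\beta_m\sigma$, to rewrite the inner integral as $\sum_{\beta\le\alpha}\binom{\alpha}{\beta}\int \partial^{\alpha-\beta}\widehat{a_m}(\xi-\cdots)\,\partial^\beta_m\sigma(\ldots,\xi-\cdots)\,\varphi_\epsilon(\xi)\, d\xi$. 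The necessary uniform bound on this last expression — the multilinear analogue of \eqref{extra}/\eqref{160226-6}, uniform in $\epsilon\in(0,1)$ and in $(\xi_1,\ldots,\xi_{m-1})$ outside the slab — follows from \eqref{decay} and boundedness of the derivatives of $\widehat{a_m}$ (which vanish to order $N$ at $0$, matching the role played by $g$ in Lemma~\ref{lm-axis}).

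Finally, having these uniform bounds, I would pass to the limit $\epsilon\to 0$ by dominated convergence in the outer variables $(\xi_1,\ldots,\xi_{m-1})$ — the dominating function being $|\widehat{a_1}(\xi_1)|\cdots|\widehat{a_{m-1}}(\xi_{m-1})|$ times the uniform constant, which is integrable since each $\widehat{a_i}\in L^1$ — together with the fact that $\varphi_\epsilon\to\delta_0$ as an approximate identity, so the inner integral converges to $\sum_{\beta\le\alpha}\binom{\alpha}{\beta}\partial^{\alpha-\beta}\widehat{a_m}(-\xi_1-\cdots-\xi_{m-1})\,\partial^\beta_m\sigma(\xi_1,\ldots,\xi_{m-1},-\xi_1-\cdots-\xi_{m-1})$ for a.e.\ $(\xi_1,\ldots,\xi_{m-1})$. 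Integrating against $\widehat{a_1}\cdots\widehat{a_{m-1}}$ yields \eqref{eq-NoLimMul}. The main obstacle is purely bookkeeping rather than conceptual: one must carry out the splitting, the Leibniz expansion, and the uniform estimate on the full $(m-1)n$-dimensional frequency domain, keeping track of which variables the cutoff $\varphi_\epsilon$ and the singularity of $\sigma$ see — but the structure is identical to the bilinear case, with the single variable $\xi_1$ there replaced by the aggregate variable $\xi_1+\cdots+\xi_{m-1}$ here, and with the extra harmless spectator integrations over $\xi_2,\ldots,\xi_{m-1}$ absorbed by Fubini and the $L^1$ bounds on the $\widehat{a_i}$. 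One should also note that the smoothness hypothesis on $\sigma$ is off $\Gamma(\mathbb{R}^{mn})$, not off $\Delta_n$, so the relevant smoothness used is precisely on the set where the last argument $\xi - \xi_1 - \cdots - \xi_{m-1}$ is nonzero, which is exactly what the $\epsilon$-support of $\varphi_\epsilon$ guarantees on the complementary region.
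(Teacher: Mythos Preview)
Your overall strategy matches the paper's, but there are two errors in the execution.

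First, your handling of the slab $\{|\xi_1+\cdots+\xi_{m-1}| < 2\epsilon\}$ does not work as written. The vanishing moments of $a_1$ are useless here, since on this slab no individual $\xi_i$ with $i\le m-1$ is forced to be small; nor does your parenthetical alternative give the claimed $O(\epsilon^{N-|\alpha|+n})$, because integrating $|\widehat{a_1}\cdots\widehat{a_{m-1}}|$ over the slab contributes only an $O(1)$ factor, and the remaining $\epsilon^{-|\alpha|}$ from $\partial^\alpha[\varphi_\epsilon]$ makes the bound blow up. What is actually small on the slab is the \emph{last} frequency: on $\mathrm{supp}\,\varphi_\epsilon$ one has $|\xi_m| = |\xi - (\xi_1+\cdots+\xi_{m-1})| \le 3\epsilon$, so the vanishing moments of $a_m$ give $|\widehat{a_m}(\xi_m)| \lesssim \epsilon^{N+1}$, and together with $\epsilon^{-|\alpha|}$ this yields $O(\epsilon^{N+1-|\alpha|})\to 0$. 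The paper carries out exactly this estimate in its piece over $\Delta^{m-1}_\epsilon$.

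Second, your last paragraph misidentifies $\Gamma(\mathbb{R}^{mn})$: by \eqref{eq-Gami2} it is the union of \emph{all} coordinate hyperplanes $\{\xi_i=0\}$, $i=1,\ldots,m$, not just $\{\xi_m=0\}$. Thus having the last argument $\xi-\xi_1-\cdots-\xi_{m-1}$ nonzero does not by itself place you off $\Gamma$, and the map $\xi\mapsto\sigma(\xi_1,\ldots,\xi_{m-1},\xi-\xi_1-\cdots-\xi_{m-1})$ need not be smooth when some $\xi_i=0$ for $i\le m-1$. The paper deals with this by additionally excising the strips $\Gamma_{i,\epsilon} = \{|\xi_i|\le\epsilon\}$ for $i=1,\ldots,m-1$ from the good region, using the moments of each $a_i$ to show those pieces also vanish; only on the remaining set $\Sigma^1_\epsilon$, where all $|\xi_i|>\epsilon$ and $|\xi_1+\cdots+\xi_{m-1}|>2\epsilon$, does it integrate by parts. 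Alternatively, your single-split route can be salvaged by observing that the set where some $\xi_i=0$ has measure zero in the outer $(\xi_1,\ldots,\xi_{m-1})$-integration, so the integration by parts and the Lemma~\ref{lm-axis}-type uniform bound hold for a.e.\ $(\xi_1,\ldots,\xi_{m-1})$, which suffices for dominated convergence --- but this should be stated explicitly in place of the incorrect claim about $\Gamma$.
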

\begin{proof}
 Recall the function $\varphi$ supported in the unit ball and $\widehat{\varphi}(0)=1.$
Fix $a_j\in \mathcal{O}(\mathbb{R}^n)$, $1\le j\le m.$ Now we have
 \begin{multline}\label{xTa-LimTMul}
\int_{\R^n} (-2\pi ix)^\alpha T_\sigma(a_1,\ldots,a_m)(x)\; dx\\
= \partial^\alpha
\left[
\widehat{T_\sigma(a_1,\ldots,a_m)}
\right](0)
=
\lim_{\epsilon\to0}
\int_{\R^n}
\widehat{T_\sigma(a_1,\ldots,a_m)}(\xi)
\partial^\alpha
[
\varphi_\epsilon
](\xi)
d\xi\\
= \lim_{\epsilon\to0}
\int_{\mathbb{R}^{mn}}
\widehat{a_1}(\xi_1)\cdots\widehat{a_{m}}(\xi_{m})
\sigma(\xi_1,\ldots,\xi_{m})
\partial^\alpha[
\varphi_\epsilon
](\xi_1+\cdots+\xi_m)
d\xi_1\cdots d\xi_{m}.
\end{multline}
Let
\[
\Delta^{m-1}_\epsilon = \{(\xi_1,\ldots,\xi_m)\in\mathbb{R}^{mn}\ :\ |\xi_1+\cdots+\xi_{m-1}|\le 2\epsilon\},
\]
and denote
\[
\Sigma^0_\epsilon = \big(\cup_{i=1}^{m-1}\Gamma_{i,\epsilon}(\mathbb{R}^{mn})\big)\cup \Delta^{m-1}_\epsilon,
\]
where $\Gamma_{i,\epsilon}(\mathbb{R}^{mn})$ is defined in \eqref{eq-Gami}. Also set $\Sigma^1_\epsilon = \mathbb{R}^{mn}\setminus \Sigma^0_\epsilon$, and hence $\mathbb{R}^{mn} = \Sigma^0_\epsilon\cup \Sigma^1_\epsilon$. The last integral in \eqref{xTa-LimTMul} can be decomposed into two parts: $\mathrm{I}_\epsilon+\mathrm{II}_\epsilon,$ where
\[
\mathrm{I}_\epsilon =
\int_{\Sigma^0_\epsilon}
\widehat{a_1}(\xi_1)\cdots\widehat{a_{m}}(\xi_{m})
\sigma(\xi_1,\ldots,\xi_{m})
\partial^\alpha[
\varphi_\epsilon
](\xi_1+\cdots+\xi_m)
d\xi_1\cdots d\xi_{m}
\]
and
\[
\mathrm{II}_\epsilon =
\int_{\Sigma^1_\epsilon}
\widehat{a_1}(\xi_1)\cdots\widehat{a_{m}}(\xi_{m})
\sigma(\xi_1,\ldots,\xi_{m})
\partial^\alpha[
\varphi_\epsilon
](\xi_1+\cdots+\xi_m)
d\xi_1\cdots d\xi_{m}.
\]
Next we will show that $\lim_{\epsilon\to0}\mathrm{I}_\epsilon = 0.$ Indeed, we can estimate
\begin{align*}
 |\mathrm{I}_\epsilon| \le& \sum_{i=1}^{m-1}
 \big|\int_{\Gamma_{i,\epsilon}(\mathbb{R}^{mn})}
\widehat{a_1}(\xi_1)\cdots\widehat{a_{m}}(\xi_{m})
\sigma(\xi_1,\ldots,\xi_{m})
\partial^\alpha[
\varphi_\epsilon
](\xi_1+\cdots+\xi_m)
d\xi_1\cdots d\xi_{m}\big|
 \\
 &+ \big|\int_{\Delta^{m-1}_\epsilon}
\widehat{a_1}(\xi_1)\cdots\widehat{a_{m}}(\xi_{m})
\sigma(\xi_1,\ldots,\xi_{m})
\partial^\alpha[
\varphi_\epsilon
](\xi_1+\cdots+\xi_m)
d\xi_1\cdots d\xi_{m}\big|.
\end{align*}
Thus, it is enough to show that
\begin{equation}\label{eq-partI}
 \lim_{\epsilon\to0}
\int_{\Gamma_{i,\epsilon}(\mathbb{R}^{mn})}
\widehat{a_1}(\xi_1)\cdots\widehat{a_{m}}(\xi_{m})
\sigma(\xi_1,\ldots,\xi_{m})
\partial^\alpha[
\varphi_\epsilon
](\xi_1+\cdots+\xi_m)
d\xi_1\cdots d\xi_{m}=0,
\end{equation}
for all $1\le i\le m-1$,
and
\begin{equation}\label{eq-partII}
 \lim_{\epsilon\to0}
 \int_{\Delta^{m-1}_\epsilon}
\widehat{a_1}(\xi_1)\cdots\widehat{a_{m}}(\xi_{m})
\sigma(\xi_1,\ldots,\xi_{m})
\partial^\alpha[
\varphi_\epsilon
](\xi_1+\cdots+\xi_m)
d\xi_1\cdots d\xi_{m}=0.
\end{equation}
Without loss of generality, we have only to prove \eqref{eq-partI} for $i=1.$ In this case, we have
\[
|\widehat{a_1}(\xi)| \le C(a_1)\min(1,|\xi|^{N+1})
\le C(a_1)|\xi|^{|\al|+1}
\]
and hence
\begin{align*}
 \Big|\int_{\Gamma_{1,\epsilon}(\mathbb{R}^{mn})}&
\widehat{a_1}(\xi_1)\cdots  \, \widehat{a_{m}}(\xi_{m})
\sigma(\xi_1,\ldots,\xi_{m})
\partial^\alpha[
\varphi_\epsilon
](\xi_1+\cdots+\xi_m)
d\xi_1\cdots d\xi_{m}\Big| \\
&\le
\int_{\Gamma_{1,\epsilon}(\mathbb{R}^{mn})} \Big|
\widehat{a_1}(\xi_1)\cdots, \widehat{a_{m}}(\xi_{m})
\sigma(\xi_1,\ldots,\xi_{m})
\partial^\alpha[
\varphi_\epsilon
](\xi_1+\cdots+\xi_m)\Big|
d\xi_1\cdots d\xi_{m} \\
&\,\le   C(a_1)\|\partial^\alpha\varphi\|_{L^\infty}
\|\widehat{a_2}\|_{L^1}\cdots \|\widehat{a_{m-1}}\|_{L^1}
\|\widehat{a_m}\|_{L^1}
\|\sigma\|_{L^\infty}
\epsilon^{-|\alpha|-n}
\int_{B(0,2\epsilon)}|\xi_1|^{|\alpha|+1}\; d\xi_1\\
&\,\le  C(a_1)\|\partial^\alpha\varphi\|_{L^\infty}
\|\widehat{a_2}\|_{L^1}\cdots \|\widehat{a_{m-1}}\|_{L^1}
\|\widehat{a_m}\|_{L^1}
\|\sigma\|_{L^\infty}\epsilon,
\end{align*}
which tends to $0$ as $\epsilon$ approaches to $0$. 

Notice that $\varphi$ is supported in the unit ball, therefore $\varphi_{\epsilon}(\xi_1+\cdots+\xi_m)$ survives 
only if $|\xi_1+\cdots+\xi_m| \le \varepsilon$.
Identity \eqref{eq-partII} can be proved similarly by making use of the fact that for all $(\xi_1,\ldots,\xi_m)\in \Delta^{m-1}_\epsilon$,
\[
|\xi_m| \le |\xi_1+\cdots+\xi_m|+|\xi_1+\cdots+\xi_{m-1}|\le 3\epsilon,
\]
and the vanishing moments of $a_m$.

Now we turn into $\mathrm{II}_\epsilon$ and rewrite it in the following form
\begin{multline*}
 \mathrm{II}_\epsilon = \int_{\substack{|\xi_1|>\epsilon,\ldots,|\xi_{m-1}|>\epsilon\\
 |\xi_1+\cdots+\xi_{m-1}|>2\epsilon}}
 \widehat{a_1}(\xi_1)\cdots \widehat{a_{m-1}}(\xi_{m-1})\int_{\mathbb{R}^n}\widehat{a_m}(\xi-\xi_1-\cdots-\xi_{m-1})\times\\
 \times\sigma(\xi_1,\ldots,\xi_{m-1},\xi-\xi_1-\cdots-\xi_{m-1})
 \partial^\alpha\big[\varphi_{\epsilon}\big](\xi)\;d\xi\;d\xi_1\cdots d\xi_{m-1}.
\end{multline*}
Fix $\xi_1,\ldots,\xi_{m-1}$ so that $|\xi_1+\cdots+\xi_{m-1}|>2\epsilon$,
and that $|\xi_i|>\epsilon$ for all $1\le i\le m-1.$
We easily see that the function
$\xi\mapsto \sigma(\xi_1,\ldots,\xi_{m-1},\xi-\xi_1-\cdots-\xi_{m-1})$
is smooth on $B(0,\epsilon)$.
Integrating by parts, we have
\begin{multline*}
 \int_{\mathbb{R}^n}\widehat{a_m}(\xi-\xi_1-\cdots-\xi_{m-1})
 \sigma(\xi_1,\ldots,\xi_{m-1},\xi-\xi_1-\cdots-\xi_{m-1})
 \partial^\alpha\big[\varphi_{\epsilon}\big](\xi)\;d\xi \\
 =\sum_{\beta\le\alpha}
 \binom{\alpha}{\beta}
 \int_{\mathbb{R}^n}\partial^{\alpha-\beta}\widehat{a_m}(\xi-\xi_1-\cdots-\xi_{m-1})
 \partial^\beta_{m}\sigma(\xi_1,\ldots,\xi_{m-1},\xi-\xi_1-\cdots-\xi_{m-1})
 \varphi_{\epsilon}(\xi)\;  d\xi.
\end{multline*}
Thus
\begin{multline*}
 \mathrm{II}_\epsilon = \sum_{\beta\le\alpha}
 \binom{\alpha}{\beta}
 \int_{\substack{|\xi_1|>\epsilon,\ldots,|\xi_{m-1}|>\epsilon\\
 |\xi_1+\cdots+\xi_{m-1}|>2\epsilon}}
 \widehat{a_1}(\xi_1)\cdots \widehat{a_{m-1}}(\xi_{m-1})\Big\{
 \int_{\mathbb{R}^n}\partial^{\alpha-\beta}\widehat{a_m}(\xi-\xi_1-\cdots-\xi_{m-1})\\
 \partial^\beta_{m}\sigma(\xi_1,\ldots,\xi_{m-1},\xi-\xi_1-\cdots-\xi_{m-1})
 \varphi_{\epsilon}(\xi)\;d\xi\Big\}
 \;d\xi_1\cdots d\xi_{m-1}.
\end{multline*}
An argument similar to Lemma \ref{lm-axis} allows us
to use Lebesgue dominated convergence theorem
to pass the limit to inside the above integral, together with the use of the approximate   identity, to obtain
\begin{multline*}
 \lim_{\epsilon\to 0}\mathrm{II}_\epsilon = \sum_{\beta\le\alpha}
 \binom{\alpha}{\beta}
 \int_{\substack{|\xi_1|>0,\ldots,|\xi_{m-1}|>0\\
 |\xi_1+\cdots+\xi_{m-1}|>0}}
 \widehat{a_1}(\xi_1)\cdots \widehat{a_{m-1}}(\xi_{m-1})\partial^{\alpha-\beta}\widehat{a_m}(-\xi_1-\cdots-\xi_{m-1})\\
 \partial^\beta_{m}\sigma(\xi_1,\ldots,\xi_{m-1},-\xi_1-\cdots-\xi_{m-1})
 \;d\xi_1\cdots d\xi_{m-1}.
\end{multline*}
This identity completes the proof of the lemma.
\end{proof}
\begin{proof}[Proof of Theorem \ref{Thm-MulLin}]
 By Lemma \ref{lem-NoLim}, it is clear that if \eqref{MulDrivCan} is valid then \eqref{HuMeng} holds automatically. For the reverse direction, we use an analogous extension of Lemma \ref{lm-Sawa} and
 repeat the proof of Theorem \ref{Thm-BiLin}.
\end{proof}

\section{Proof of Theorem \ref{Main-Thm}}
Let $N \in {\mathbb N}$ be fixed
and let $\sigma$ be a bounded function in $\mathbb{R}^n$ that satisfies either \eqref{Coifman-Meyer} 
or \eqref{CoifGraf}, and let $T_\sigma$ be the multilinear multiplier operator associated to $\sigma$. As showed in \cite{GNNS17}, $T_{\sigma}$ is bounded 
from $H^{p_1}({\mathbb R}^n)\times\cdots\times H^{p_m}({\mathbb R}^n)$ to $H^p({\mathbb R}^n)$, 
where $0<p\le 1, 0<p_j<\infty$ and $\frac1{p}=\frac1{p_1}+\cdots+\frac1{p_m}$, if \eqref{HuMeng} holds, i.e.,
\[
\int_{{\mathbb R}^n} x^{\alpha}T_{\sigma}(a_1,\ldots,a_m)(x)dx=0,
\]
for all $a_j\in \mathcal{O}_N(\mathbb{R}^n)$ and all $0<|\alpha|\le \floor{n(\frac1p-1)}$. Therefore, 
the reverse direction from $(b)$ to $(a)$ of Theorem \ref{Main-Thm} follows from Theorem \ref{Thm-MulLin}.

To obtain the other direction, since $T_\sigma$ satisfies \eqref{EST-Tatoms}, 
$|x|^N T_\sigma(a_1,\ldots,a_m)$ is an integrable function. 
Therefore if $T_\sigma(a_1,\ldots,a_m)\in H^p({\mathbb R}^n)$, 
then \eqref{HuMeng} is valid. This is a consequence of a result in \cite[p. 128, 5.4 (c)]{SteinHA}. 
Similarly, we can prove Theorem \ref{Main-Thm2} by repeating the above argument.

\section{Remarks, Examples, and Applications}

It is noteworthy to mention that our results are also valid for
 symbols of intermediate or {\it mixed type}, i.e., of the form
\begin{equation}\label{eq.CalZygOPT-3}
 \sigma(\xi_1,\ldots,\xi_m) 
=
\sum_{\rho=1}^T
\sum_{\substack{I_1^\rho,\ldots,I_{G(\rho)}^\rho }}
\prod_{g=1}^{G(\rho)}
 \sigma_{I_g^\rho}   (\{\xi_l\}_{l \in I_g^{ \rho } }) ,
\end{equation}
where for each $\rho=1,\ldots,T$, $I^\rho_1, \ldots ,I^\rho_{G(\rho)}$ is a partition of 
$\{1,\ldots,m\}$ 
and each $T_{\sigma_{I_g^\rho}}$ is an $| I_g^\rho|$-linear Coifman-Meyer multiplier operator.
We write $I^\rho_1+ \cdots + I^\rho_{G(\rho)}=\{1,\ldots,m\}$ to denote such partitions. 
There is an analogous theorem for these general symbols.

\begin{theorem}\label{Main-Thm3}
Let $\sigma $ be as in \eqref{eq.CalZygOPT-3}. 
 Fix $0<p_i<\infty$, $0<p\le 1$ that satisfy \eqref{indicesHolder}.
 Then the following two statements are equivalent:
 \begin{enumerate}[(a)]
 \item $T_\sigma$ maps $H^{p_1}(\mathbb R^n)\times\cdots\times H^{p_m}(\mathbb R^n)$ to $H^p(\mathbb R^n).$
 \item For all $|\alpha|\le \floor{n(\frac{1}{p}-1)}$ condition \eqref{MulDrivCan} holds, i.e.
\begin{equation*}
\partial^\alpha_{m}
\sigma(\xi_1,\ldots,\xi_m)=0
\end{equation*}
for all $(\xi_1,\ldots,\xi_m)$ on the hyperplane $\Delta_n$ away from the points of singularity of $\sigma$.
 \end{enumerate}
\end{theorem}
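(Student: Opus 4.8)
The plan is to reduce Theorem~\ref{Main-Thm3} to Theorem~\ref{Thm-MulLin} exactly as Theorems~\ref{Main-Thm} and~\ref{Main-Thm2} were reduced. First I would record that symbols of mixed type \eqref{eq.CalZygOPT-3} satisfy the differential inequality \eqref{decay} and the smoothness hypothesis of Theorem~\ref{Thm-MulLin}, namely $\sigma \in L^\infty(\mathbb{R}^{mn}) \cap \mathcal{C}^\infty(\mathbb{R}^{mn} \setminus \Gamma(\mathbb{R}^{mn}))$. Indeed, each factor $\sigma_{I_g^\rho}(\{\xi_l\}_{l\in I_g^\rho})$ is a Coifman--Meyer symbol in the variables $\{\xi_l\}_{l\in I_g^\rho}$, hence bounded and smooth away from the origin of that coordinate subspace; applying the Leibniz rule to the product over $g$ and summing over $\rho$ and the partitions, one gets that $\sigma$ is bounded, smooth wherever none of the partial sums $\sum_{l\in I_g^\rho}\xi_l$ vanishes, and in particular smooth on $\mathbb{R}^{mn}\setminus\Gamma(\mathbb{R}^{mn})$ since $\xi_m\ne 0$ together with the other $\xi_l\ne 0$ forces every block sum involving only nonzero building blocks to be controlled; more precisely each differentiation in $\xi_k$ hits only those factors whose index block contains $k$, and \eqref{Coifman-Meyer} for that factor gives the bound $(\sum_{l\in I_g^\rho}|\xi_l|)^{-|\alpha_k|}\le |\xi_k|^{-|\alpha_k|}$, which multiplies out to \eqref{decay}.

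Next I would verify that $T_\sigma$ satisfies the pointwise majorization \eqref{EST-Tatoms} on tuples from $\mathcal{O}_N(\mathbb{R}^n)$. Here I would combine the two model estimates already quoted: for each fixed $\rho$ the operator $T_{\prod_g \sigma_{I_g^\rho}}$ is a composition/tensor arrangement of lower-order Coifman--Meyer operators acting on disjoint groups of the inputs, so one applies \eqref{eq:170222-2} inside each block $I_g^\rho$ and multiplies the resulting block-bounds together (as in \eqref{eq:170222-1}), then sums over the finitely many $\rho$ and partitions. The upshot is a majorant $b \in L^{p_j}$ decaying like $|x|^{-mn-N-1}$, which is all Theorem~\ref{Thm-MulLin} requires; I would cite \cite{GNNS17} for the block estimates rather than reproving them.

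With those two structural facts in hand, Theorem~\ref{Thm-MulLin} applies verbatim and yields the equivalence of the moment-cancellation condition \eqref{HuMeng} with the derivative-vanishing condition \eqref{MulDrivCan} on $\Delta_n$ minus the singular set. Finally I would close the loop to boundedness: the implication (b)$\Rightarrow$(a) follows from the boundedness criterion of \cite{GNNS17} (condition \eqref{HuMeng} implies $T_\sigma\colon H^{p_1}\times\cdots\times H^{p_m}\to H^p$), once we know from Theorem~\ref{Thm-MulLin} that \eqref{MulDrivCan} forces \eqref{HuMeng}; and (a)$\Rightarrow$(b) follows because \eqref{EST-Tatoms} makes $|x|^N T_\sigma(a_1,\dots,a_m)$ integrable, so membership of $T_\sigma(a_1,\dots,a_m)$ in $H^p$ forces the vanishing moments via \cite[p.~128, 5.4(c)]{SteinHA}, and then Theorem~\ref{Thm-MulLin} converts \eqref{HuMeng} back into \eqref{MulDrivCan}. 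This is the same two-step argument used in Section~5.

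The step I expect to be the main obstacle is the bookkeeping in verifying \eqref{decay} and especially \eqref{EST-Tatoms} for the mixed-type symbol: one must be careful that when a block $I_g^\rho$ does \emph{not} contain the index $m$, differentiation $\partial_m^\alpha$ does not touch that factor at all, so the decay in $|\xi_m|$ comes entirely from the unique block containing $m$, and the other blocks merely contribute their $L^\infty$ bounds — this is what makes \eqref{decay} hold with the stated one-sided convention ``$|\alpha_j|>0$ if $\xi_j\ne 0$'' rather than a genuinely jointly-homogeneous bound. Handling the decay of the majorant $b$ through a multi-block composition, where the output of one Coifman--Meyer piece becomes the input of the global product, requires the product structure of \eqref{eq:170222-1} and some care with the exponents $1-\frac{1}{p_k}+\frac{N+1}{nm}$; but none of this is conceptually new, so I would present it compactly and defer the detailed kernel estimates to \cite{GNNS17}.
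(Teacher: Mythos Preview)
Your proposal is correct and follows precisely the route the paper indicates (the paper in fact omits the proof entirely, remarking only that ``similar techniques can be used to obtain it''). One small correction: the singular set of a Coifman--Meyer factor $\sigma_{I_g^\rho}$ is the origin of its coordinate block $\{\xi_l=0\ \text{for all } l\in I_g^\rho\}$, not the zero set of the partial sum $\sum_{l\in I_g^\rho}\xi_l$; but since each such block-origin is contained in $\Gamma(\mathbb{R}^{mn})$, your conclusion that $\sigma$ is smooth off $\Gamma$ (and hence that Theorem~\ref{Thm-MulLin} applies) stands.
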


For the sake of brevity we don't include a proof of Theorem~\ref{Main-Thm3} in this note, but we point out that similar techniques can be used to obtain it. 

Next, we provide examples of functions that satisfy conditions \eqref{eq-CanDrvSigma}; some of these examples are inspired by those given in \cite{G}: On $\mathbb R^2\times \mathbb R^2$ with coordinates $(\xi_1,\eta_2,\eta_1,\eta_2)$
consider the multipliers
\begin{align*}
\si_0(\xi_1,\xi_2,\eta_1,\eta_2)
&=
 \f{ \xi_1\eta_2 - \xi_2\eta_1 }{
{ |\xi_1|^2+ |\xi_2|^2+ |\eta_1|^2+|\eta_2|^2 } }\\
&=
 \f{1}{
{ |\xi_1|^2+ |\xi_2|^2+ |\eta_1|^2+|\eta_2|^2 } }
\det
\begin{pmatrix}
\xi_1&\xi_2\\
\eta_1&\eta_2
\end{pmatrix} \, .
\end{align*}
An alternative example is obtained by considering the multiplier
\begin{align*}
\si_1(\xi_1,\xi_2,\eta_1,\eta_2)
&=
 \f{ \xi_1\eta_2 - \xi_2\eta_1 }{\sqrt{ |\xi_1|^2+ |\xi_2|^2 } \, \sqrt{ |\eta_1|^2+|\eta_2|^2 }}\\
&=
 \f{ 1 }{\sqrt{ |\xi_1|^2+ |\xi_2|^2 } \, \sqrt{ |\eta_1|^2+|\eta_2|^2 }}
\det
\begin{pmatrix}
\xi_1&\xi_2\\
\eta_1&\eta_2
\end{pmatrix} \, .
\end{align*}
It is easy to verify that for $(\xi_1,\xi_2)\neq (0,0)$ we have
$$
\si_0(\xi_1,\xi_2,-\xi_1,-\xi_2) =\si_1(\xi_1,\xi_2,-\xi_1,-\xi_2)= 0.
$$

For higher order cancellation consider the examples
$$
\si_2(\xi_1,\xi_2,\eta_1,\eta_2)=
 \f{ \xi_1^2\eta_2^2 - 2\xi_1\xi_2\eta_1 \eta_2+\xi_2^2\eta_1^2}{
 ( |\xi_1|^2+ |\xi_2|^2+ |\eta_1|^2+|\eta_2|^2)^2 }
$$
and
$$
\si_3(\xi_1,\xi_2,\eta_1,\eta_2)=
 \f{ \xi_1^2\eta_2^2 - 2\xi_1\xi_2\eta_1 \eta_2+\xi_2^2\eta_1^2}{ ( |\xi_1|^2+ |\xi_2|^2)( |\eta_1|^2+|\eta_2|^2 ) }
$$
 both of which satisfy:
$$
\p_{\xi_1}^k\p_{\xi_2}^l \si_3(\xi_1,\xi_2,-\xi_1,-\xi_2) =
\p_{\xi_1}^k\p_{\xi_2}^l \si_4(\xi_1,\xi_2,-\xi_1,-\xi_2) =0 , \quad |\xi_1|^2+|\xi_2|^2\ne 0,
$$
for $(k,l) \in\{(0,1),(1,0),(0,0)\}$.
The symbols $\si_1$ and $\si_3$ are inspired by \cite{G}
and arise by expansions of the Hessian or by combinations of the Riesz transforms.
Examples of $\si_0$ and $\si_2$ are of Coifman-Meyer type (case (i) in the introduction)
while $\si_1$ and $\si_3$ are as in case (ii), i.e., sums of products of Calder\'on-Zygmund operators.

We generalize this example as follows:
\[
\sigma_{2N-2}(\xi_1,\xi_2,\eta_1,\eta_2)
=
\frac{1}{(|\xi_1|^2+ |\xi_2|^2+|\eta_1|^2+|\eta_2|^2)^{n_1+n_2+\cdots+n_N}}
\prod_{j=1}^N \det
\begin{pmatrix}
\xi_1{}^{n_j}&\xi_2{}^{n_j}\\
\eta_1{}^{n_j}&\eta_2{}^{n_j},
\end{pmatrix}
\]
where each $n_j$ is positive integer.
By the Leibniz rule we can check that
\[
\p_{\xi_1}^k\p_{\xi_2}^l \si_3(\xi_1,\xi_2,-\xi_1,-\xi_2) =
\p_{\xi_1}^k\p_{\xi_2}^l \si_4(\xi_1,\xi_2,-\xi_1,-\xi_2) =0 , \quad |\xi_1|^2+|\xi_2|^2\ne 0,
\]
as long as $k+l \le N-1$.

\medskip

Finally, we address the following question\footnote{posed by R. R. Coifman by personal communication}
and give a partial answer: 
Find a condition on a bilinear multiplier $B(f,g)$ such for any
two sequences $f_k\to f$ weakly and $g_k\to g$ weakly, then $B(f_k,g_k)\to B(f,g)$ weakly.
Suppose that $B$ is   given in multiplier form by
$$
B(f,g)(x) =
\int_{\mathbb R^n} \int_{\mathbb R^n}
\widehat{f}(\xi) \widehat{g}(\eta) \sigma(\xi,\eta) e^{2\pi i x\cdot (\xi+\eta)}
d\xi d\eta
$$
where $f,g$ are defined on $\mathbb R^n$ and $\sigma(\xi,\eta)$ is a Coifman-Meyer multiplier, i.e., it satisfies:
$$
\big|\partial_\xi^{\alpha} \partial_\eta^{\beta} \sigma(\xi,\eta) \big| \le C_{\alpha, \beta} (|\xi|+|\eta|)^{-|\alpha|-|\beta|}
$$
for sufficiently large multiindices $\alpha$, $\beta$. We provide a condition on $\sigma$ so that the
associated operator preserves weak convergence. Obviously the product $B(f,g) = fg$ does not preserve
weak convergence because the symbol $\si(\xi_1,\xi_2)=1$  fails to satisfy condition $\rm(v)$ below.

\begin{corollary}
Let $1<p<\infty$ and let $B$ be as above.
Suppose that $f_k$, $g_k$, $f$, $g$, $k=1,2,\dots$ are functions
on $\mathbb R^n$ that satisfy:
\begin{enumerate}
\item[{\rm(i)}] $\sup_{k} \| f_k \|_{L^p(\mathbb R^n)}\le C$.
\item[{\rm(ii)}] $\sup_{k} \| g_k \|_{L^{p'}(\mathbb R^n)}\le C$.
\item[{\rm(iii)}] $f_k\to f$ weakly in $L^p({\mathbb R}^n)$.
\item[{\rm(iv)}] $g_k\to g$ weakly in $L^{p'}({\mathbb R}^n)$.
\item[{\rm(v)}] $\si(\xi, -\xi) = 0 $ for all $\xi\neq 0$.
\item[{\rm(vi)}] $B(f_k,g_k) $ converges a.e. to $B(f,g)$.
\end{enumerate}
Then $B(f_k,g_k) $ converges to $B(f,g)$ weakly in $H^1({\mathbb R}^n)$  in the sense that
\begin{equation}\label{mn}
\int_{\mathbb R^n} \ B(f_k,g_k) \varphi \, dx
\to
\int_{\mathbb R^n} \ B(f ,g ) \varphi \, dx
\end{equation}
for all functions $\varphi \in {\rm VMO}(\mathbb R^n)$.
\end{corollary}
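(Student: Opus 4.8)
The plan is to use Theorem~\ref{Main-Thm} (with $m=2$, $p_1=p$, $p_2=p'$, $p=1$, so $\floor{n(\frac1p-1)}=0$) together with condition (v) to deduce that $B$ maps $L^p(\mathbb{R}^n)\times L^{p'}(\mathbb{R}^n)$ into $H^1(\mathbb{R}^n)$. Indeed, the Coifman-Meyer hypothesis on $\sigma$ and condition (v), namely $\sigma(\xi,-\xi)=0$ on $\Delta_n\setminus\{0\}$, are precisely statements (a)-(b) of Theorem~\ref{Main-Thm} in the case at hand, so $B$ is a bounded bilinear operator into $H^1$. By conditions (i), (ii) and this boundedness, the sequence $B(f_k,g_k)$ is bounded in $H^1(\mathbb{R}^n)$; say $\sup_k \|B(f_k,g_k)\|_{H^1}\le A<\infty$.

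\medskip
\noindent\textbf{Passing to the weak limit.} The dual of $H^1(\mathbb{R}^n)$ is $\mathrm{BMO}(\mathbb{R}^n)$, and the closure of, say, continuous functions vanishing at infinity in $\mathrm{BMO}$ is $\mathrm{VMO}(\mathbb{R}^n)$, whose dual is $H^1(\mathbb{R}^n)$. Thus a bounded sequence in $H^1$ has a subsequence converging in the weak-$*$ topology of $(\mathrm{VMO})^*=H^1$; equivalently, testing against $\varphi\in\mathrm{VMO}$ is the correct pairing for \eqref{mn}. The argument then runs as follows: first, I would show that \emph{every} weak-$*$ subsequential limit of $B(f_k,g_k)$ in $H^1$ equals $B(f,g)$. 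Since $\sup_k\|B(f_k,g_k)\|_{H^1}\le A$ and $H^1\subset L^1$, any subsequence has a further subsequence converging weak-$*$ in $H^1$ to some $h\in H^1$ with $\|h\|_{H^1}\le A$. But condition (vi) says $B(f_k,g_k)\to B(f,g)$ a.e., and the weak-$*$ limit $h$ can be identified with the a.e. limit on test functions $\varphi\in C_c^\infty\subset\mathrm{VMO}$: for such $\varphi$, $\int B(f_k,g_k)\varphi\to\int h\varphi$ by weak-$*$ convergence, while $\int B(f_k,g_k)\varphi\to\int B(f,g)\varphi$ by the a.e. convergence (vi) combined with a uniform integrability argument — the $L^1$-boundedness coming from $H^1\hookrightarrow L^1$, upgraded to equiintegrability via the $H^1$ bound, or more simply via a Vitali-type argument on the compact support of $\varphi$. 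Hence $h=B(f,g)$. Since every subsequential weak-$*$ limit is the same element $B(f,g)$, the full sequence converges weak-$*$ in $H^1$ to $B(f,g)$, which is exactly \eqref{mn}.

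\medskip
\noindent\textbf{The main obstacle.} The delicate point is justifying $\int B(f_k,g_k)\varphi\,dx\to\int B(f,g)\varphi\,dx$ from the a.e. convergence (vi) alone: a.e. convergence plus $L^1$-boundedness does not by itself give convergence of integrals (mass can escape to infinity or concentrate). What saves us is that the functions $B(f_k,g_k)$ are bounded \emph{in $H^1$}, not merely in $L^1$; a bounded subset of $H^1(\mathbb{R}^n)$ is equiintegrable (no concentration: this follows from the atomic or maximal-function characterization, since the grand maximal functions are uniformly bounded in $L^1$ and one controls tails), so Vitali's convergence theorem applies on any fixed bounded set, and the contribution from $|x|$ large is uniformly small because $\varphi\in\mathrm{VMO}$ can be approximated in $\mathrm{BMO}$ by bounded functions and the $H^1$-$\mathrm{BMO}$ pairing is continuous. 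I would therefore structure the proof to isolate this equiintegrability lemma for $H^1$-bounded sequences, cite the relevant fact (e.g.\ via the atomic decomposition or \cite{SteinHA}), and then the rest is the standard ``unique subsequential limit implies convergence'' routine. Condition (v) is used only once, to invoke Theorem~\ref{Main-Thm} and land in $H^1$; conditions (i)–(iv) supply the uniform bounds and identify the candidate limit; condition (vi) is what pins the weak-$*$ limit down to $B(f,g)$.
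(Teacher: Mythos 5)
Your first step (condition (v) plus the multiplier theorem gives boundedness of $B$ from $L^p\times L^{p'}$ into $H^1$, hence $\sup_k\|B(f_k,g_k)\|_{H^1}<\infty$) matches the paper, which invokes Theorem~\ref{Thm-BiLin} together with the result of \cite{GNNS17}. The gap is in your second step. The paper does not re-derive the passage from ``bounded in $H^1$ plus a.e.\ convergence'' to weak convergence against ${\rm VMO}$; it cites the theorem of Jones and Journ\'e \cite{JJ}, which is precisely this statement. You attempt to prove it yourself, and the key claim you rely on --- that a bounded subset of $H^1(\mathbb R^n)$ is equiintegrable, so that a Vitali-type argument identifies the weak-$*$ limit with the a.e.\ limit --- is false. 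Take a fixed $(1,\infty)$-atom $a$ and set $h_k(x)=k^n a(kx)$: then $\|h_k\|_{H^1}=\|a\|_{H^1}$ for all $k$, yet $\int_{B(0,1/k)}|h_k|\,dx=\|a\|_{L^1}$ while $|B(0,1/k)|\to 0$, so mass concentrates on sets of arbitrarily small measure and uniform integrability fails (even on a fixed compact set). Consequently the Vitali step, which is the only place where the a.e.\ convergence (vi) is converted into convergence of the integrals $\int B(f_k,g_k)\varphi\,dx$, does not go through; the tail estimate via ``approximating $\varphi$ in BMO by bounded functions'' is likewise too weak to localize the $H^1$--BMO pairing.

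What actually makes the conclusion true in examples like the one above is the cancellation of $H^1$ functions (e.g.\ $\int h_k\varphi\to\varphi(0)\int a=0$ for smooth compactly supported $\varphi$), not equiintegrability, and exploiting this in general is exactly the content of \cite{JJ}. So your overall skeleton (uniform $H^1$ bound, weak-$*$ compactness in $(\mathrm{VMO})^*=H^1$, uniqueness of subsequential limits) is fine, but the identification of the limit must be replaced: either quote the Jones--Journ\'e theorem as the paper does, or reproduce its proof, which is genuinely different from the Vitali/equiintegrability route you sketched.
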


\begin{proof}
The boundedness of $B$ from $L^p({\mathbb R}^n)\times L^{p'}({\mathbb R}^n) $ 
to $H^1({\mathbb R}^n)$ can proved by combining condition $\rm(v)$ with Theorem~\ref{Thm-BiLin}
($N=1$) and the result in
\cite{GNNS17}; a version of this result was also proved by Dobyinski \cite[Lemme 3.8]{Do}; see also \cite{CLMS}.
It follows that
$$
\sup_{k} \| B(f_k,g_k) \|_{H^1} \le C_n \sup_{k} \| f_k \|_{L^p} \| g_k \|_{L^{p'}} \le C_n C^2 \, .
$$
Thus the sequence $ B(f_k,g_k),
k=1,2,\ldots$ is uniformly bounded in $H^1({\mathbb R}^n)$ and converges a.e. to $B(f,g)$.
Then we obtain \eqref{mn} as a consequence of the  result in \cite{JJ}.
\end{proof}

\end{document}